\documentclass{article}
\usepackage{graphicx}
\usepackage{amssymb, amsmath, amsthm}
\usepackage{bbm, multicol, multirow, hhline}
\usepackage[round]{natbib}

\usepackage{tikz}
\usetikzlibrary{arrows,shapes.arrows,shapes.geometric}

\usepackage{amsmath, amsfonts, amsthm, amssymb, bbm, bm}
\usepackage{mathrsfs}
\usepackage{enumerate}
\usepackage{tabularx}
\usepackage{multicol}
\usepackage{multirow}



\newcommand{\Ind}{\mathbbm{1}}

\newcommand{\reals}{\mathbb{R}}






\newcommand\indep{\protect\mathpalette{\protect\independenT}{\perp}}
\def\independenT#1#2{\mathrel{\rlap{$#1#2$}\mkern2mu{#1#2}}}

\theoremstyle{plain}
\newtheorem{lem}{Lemma}[section]
\newtheorem{thm}[lem]{Theorem}
\newtheorem{prop}[lem]{Proposition}
\newtheorem{cor}[lem]{Corollary}
\newtheorem{conj}[lem]{Conjecture}
\theoremstyle{definition}
\newtheorem{dfn}[lem]{Definition}
\newtheorem{rmk}[lem]{Remark}
\newtheorem{exm}[lem]{Example}

\newcommand{\benum}{\begin{enumerate}}
\newcommand{\eenum}{\end{enumerate}}

\newcommand{\bitem}{\begin{itemize}}
\newcommand{\eitem}{\end{itemize}}

\newcommand{\barr}{\begin{array}}
\newcommand{\earr}{\end{array}}

\newcommand{\bmat}{\begin{pmatrix}}
\newcommand{\emat}{\end{pmatrix}}

\newcommand{\blist}{\renewcommand{\labelenumi}{\textbf{\arabic{enumi}}.} \begin{enumerate}}
\newcommand{\elist}{\end{enumerate} \renewcommand{\labelenumi}{\arabic{enumi}.}}

\newcommand{\bs}{\boldsymbol}

\def\bal#1\eal{\begin{align*}#1\end{align*}}


\setlength{\parindent}{0pt}
\setlength{\parskip}{6pt}

\title{Smoothness of marginal log-linear parameterizations}
\author{Robin Evans}




\newcommand{\bmllp}{
\begin{tabular}{r|l}
$M_i$&$L$\\
\hline}
\newcommand{\emllp}{\end{tabular}}

\newcommand{\nud}{\mspace{-2mu}}
\newcommand{\sn}{\scriptscriptstyle\nud}

\begin{document}

\maketitle

\begin{abstract}
  
  We provide results demonstrating the smoothness of some marginal
  log-linear parameterizations for distributions on multi-way
  contingency tables.  First we give an analytical relationship
  between log-linear parameters defined within different margins, and
  use this to prove that some parameterizations are equivalent to ones
  already known to be smooth.  Second we construct an iterative method
  for recovering joint probability distributions from marginal
  log-linear pieces, and prove its correctness in particular cases.
  Finally we use Markov chain theory to prove that certain cyclic
  conditional parameterizations are also smooth.  These results are
  applied to show that certain conditional independence models are
  curved exponential families.
\end{abstract}

\section{Introduction}

Models for multi-way contingency tables may include restrictions on
various marginal or conditional distributions, especially in the
context of longitudinal or causal models \citep[see, for example,][and
references therein]{lang:94, bergsma:09, evans:13}.  Such models can
often be parameterized by combining log-linear parameters from within
different marginal tables.  The resulting \emph{marginal log-linear}
parameterizations, introduced by \citet{br:02}, provide an elegant and
flexible way to parameterize a multivariate discrete probability
distribution.

Setting these marginal log-linear parameters to zero can be used to
define arbitrary conditional independence models \citep{rudas:10,
  forcina:10}, including those corresponding to undirected graphical
models or Bayesian networks.  If these zero parameters can be embedded
into a larger smooth parameterization of the joint distribution, then
the model defined by the conditional independence constraints is a
curved exponential family, and therefore possesses good statistical
properties.  This approach is applied by \citet{rudas:10} and
\citet{evans:13} to classes of graphical models.


Unfortunately, there exist models of conditional independence
which---though believed to be curved exponential families---cannot be
embedded into parameterizations currently known to be smooth.
\citet{forcina:12} studies examples of models defined by `loops' of
conditional independences, such as
\begin{align}
X_1 \indep X_2 \,|\, X_3, \qquad X_1 \indep X_3 \,|\, X_4, 
\qquad X_1 \indep X_4 \,|\, X_2, \label{eqn:cyclemod}
\end{align} 
which can be defined by constraints on the conditional distributions
$p_{2|13}$, $p_{3|14}$ and $p_{4|12}$ respectively.  However it is not
clear whether a smooth parameterization of the joint distribution can
be constructed using these conditionals.  The model can also be
defined by setting a particular collection of marginal log-linear
parameters to zero (see Section \ref{sec:cycles} for details), but
there is no way to embed these parameters into a smooth
parameterization of the kind studied by \citet{br:02}, so their
results do not apply.  \citet{forcina:12} gives a numerical test for
this model which is highly suggestive of smoothness, but no formal
proof is available.

The contribution of this paper is to show that the class of smooth
discrete parameterizations which can be constructed using marginal
log-linear (MLL) parameters is considerably larger than had previously
been known, and that models such as (\ref{eqn:cyclemod}) can indeed be
embedded into these parameterizations.  We give three different
methods for demonstrating smoothness in this context.  First we
provide an analytical expression for the relationship between
log-linear parameters defined within different marginal distributions;
this allows us to prove the equivalence of various parameterizations.
Second we show that particular fixed point maps relating different
parameters are contractions, and hence can be used to uniquely recover
the joint probability distribution.  Lastly we use Markov chain theory
to show that we can smoothly recover joint probability distributions
from `cyclic' conditional distributions; this is used to show that
certain conditional independence models, including the one above, are
curved exponential families of distributions.

The rest of the paper is organized as follows: Section \ref{sec:mllps}
reviews marginal log-linear parameters and their properties.
Section \ref{sec:smooth} specifies the relationship between log-linear
parameters defined within different margins, enabling certain
parameterizations to be proven equivalent.  Section \ref{sec:fixed}
extends this by constructing fixed point methods that smoothly recover
a joint distribution.  Section \ref{sec:cycles} further extends the
results of Section \ref{sec:smooth} using Markov chain theory, and
demonstrates that certain conditional independence models are curved
exponential families.  Section \ref{sec:discuss} contains 
discussion, and a conjecture on the precise characterization of smooth
MLL parameterizations.

\section{Marginal Log-Linear Parameters} \label{sec:mllps}

We consider multivariate distributions over a finite collection of
binary random variables $X_v \in \{0,1\}$, for $v \in V$; we denote
their joint distribution by $p_V \equiv p(x_V) \equiv P(X_V = x_V)$.
All the results herein also hold (or have analogues) in the case of
general finite discrete variables, but the notation becomes more
cumbersome.  For $M \subseteq V$ we denote the marginal distribution
over $X_M = (X_v,\, v \in M)$ by $p_M \equiv p(x_M) \equiv P(X_M =
x_M)$, and for disjoint $A,B \subseteq V$ we denote the relevant
conditional distribution by $p_{A|B} \equiv p(x_A \,|\, x_B) \equiv
P(X_A = x_A \,|\, X_B = x_B)$.  Distributions are assumed to be
strictly positive: $p_V > 0$.

\begin{dfn}
Let $\Delta^k \equiv \{p_V > 0\}$ be the strictly positive probability 
simplex of dimension $k = 2^{|V|}-1$.  We say that a homeomorphism 
$\theta : \Delta^k \rightarrow \Theta \subseteq \reals^k$ onto an open
set $\Theta$ is a \emph{smooth parameterization} of $\Delta^k$ if 
$\theta$ is twice continuously differentiable, and its Jacobian
has full rank $k$ everywhere.
\end{dfn}

The canonical smooth parameterization of $\Delta^k$ is via
\emph{log-linear parameters} $\eta_L$, defined by the M\"obius
expansion
\begin{align*}
\log p_V(x_V) = \sum_{L \subseteq V} (-1)^{|x_L|} \eta_L;
\end{align*}
here $|x_L| = \sum_{v \in L} x_v$ is the number of 1s in $x_L \in
\{0,1\}^L$.  It follows by M\"obius inversion that
\begin{align}
  \eta_L = 2^{-|V|} \sum_{x_V \in \{0,1\}^{|V|}} (-1)^{|x_L|} \log
  p_V(x_V); \label{eqn:llp}
\end{align}
see, for example, \citet{lau:96}.
For example, if $V = \{1,2,3\}$,
\begin{align*}
\eta_{13} = \frac{1}{8} \log \frac{p(0, 0, 0) \, p(0, 1, 0) \, p(1, 0, 1) \, p(1, 1, 1)}{p(1, 0, 0) \, p(1, 1, 0) \, p(0, 0, 1) \, p(0, 1, 1)}.
\end{align*}
It is well known that the collection $\bs\eta \equiv (\eta_L, \,
\emptyset \neq L \subseteq V)$ provides a smooth parameterization of
the joint distribution $p_V$ with $\Theta = \reals^k$.

\begin{dfn}
We define a \emph{marginal log-linear parameter} by analogy with
(\ref{eqn:llp}), as the ordinary log-linear parameter for a particular
marginal distribution.  Let $L \subseteq M \subseteq V$; then
\begin{align*}
\lambda_L^M &= 2^{-|M|} \!\!\!\! \sum_{x_{\sn M} \in \{0,1\}^{|M|}} (-1)^{|x_L|} \log p_M(x_M)
\end{align*}
is the marginal log-linear parameter associated with the \emph{margin}
$M$ and the \emph{effect} $L$.  See \citet{br:02}.
\end{dfn}

Clearly $\lambda_L^V = \eta_L$ and, for example,
\begin{align*}
\lambda_{13}^{13} &= \frac{1}{4} \log \frac{p_{13}(0,0) \, p_{13}(1,1)}{p_{13}(1,0) \, p_{13}(0,1)},
\end{align*}
which is the log-odds ratio between $X_1$ and $X_3$.  In order to fit
a model with the constraint $X_1 \indep X_3$ we could choose a
parameterization that includes $\lambda_{13}^{13}$, and fix it to be
zero.

One way to characterize the main idea of \citet{br:02} is as follows:
given some arbitrary margins $p_{M_1}, \ldots, p_{M_k}$ of a joint
distribution $p_V$, what additional information does one need to
smoothly reconstruct the full joint distribution $p_V$?  They show
that one possibility is to take the collection of log-linear
parameters $\eta_L = \lambda_L^V$ where $L \nsubseteq M_i$ for any $i
= 1,\ldots,k$.

It follows that given any inclusion-respecting sequence of margins
$M_1, \ldots, M_k =V$ (i.e.\ $M_i \subseteq M_j$ only if $i < j$), we
can smoothly parameterize $p_V$ with marginal log-linear parameters of
the form $\lambda_L^{M_i}$, where $L \subseteq M_i$ but $L \nsubseteq
M_j$ for any $j < i$.  

\begin{exm} \label{exm:mllp} Take the inclusion-respecting sequence of
  margins $\{1,2\}$, $\{2,3\}$, $\{1,2,3\}$.  This gives us the smooth
  parameterization consisting of the vector $\bs\lambda_{\mathcal{P}}$
  below.  The pairs $(L,M)$ are summarized (grouped by margin) in the
  adjacent table.\footnote{Note that here, and in the sequel, we
    abbreviate sets of integers by omitting the braces and commas in
    order to avoid overburdened notation: so, for example, $23$ means
    $\{2,3\}$.}
\begin{align*}
&\mathcal{P}: \bmllp
12&1, 2, 12\\
23&3, 23\\
123&13, 123.\\
\emllp
&& \bs\lambda_{\mathcal{P}} = (\lambda_1^{12}, \lambda_{2}^{12}, \lambda_{12}^{12}, 
\lambda_{3}^{23}, \lambda_{23}^{23}, \lambda_{13}^{123}, \lambda_{123}^{123})^T.
\end{align*}
\end{exm}

Now, let $\mathcal{P}$ be an \textbf{arbitrary} collection of
effect-margin pairs $(L,M)$ such that $\emptyset \neq L \subseteq M
\subseteq V$.  Define
\[
\bs\lambda_{\mathcal{P}} = \bs\lambda_{\mathcal{P}}(\bs p) =
(\lambda_L^M : (L,M) \in \mathcal{P})
\]
to be the corresponding vector of marginal log-linear parameters.  The
main question considered by this paper is: under what circumstances
does $\bs\lambda_{\mathcal{P}}$ constitute a smooth parameterization
of $p_V$?

\subsection{Existing Results}

We say that $\mathcal{P}$ is \emph{complete} if every non-empty subset
of $V$ appears as an effect in $\mathcal{P}$ exactly once.  If, in
addition, the margins can be ordered so that each effect appears with
the first margin of which it is a subset, we say that $\mathcal{P}$ is
\emph{hierarchical}.  Parameterizations that can be constructed from
an inclusion-respecting sequence of margins in the manner of Example
\ref{exm:mllp} correspond precisely to hierarchical $\mathcal{P}$.
\citet{br:02} show that if $\mathcal{P}$ is complete and hierarchical
then $\bs\lambda_{\mathcal{P}}$ gives a smooth parameterization of the
joint distribution; in addition, they show that completeness is
necessary for smoothness.  \citet{forcina:12} shows that if
$\mathcal{P}$ is complete and contains only two distinct margins $M$,
then $\bs\lambda_{\mathcal{P}}$ is smooth.  

To our knowledge, these are the only existing results on the
smoothness of marginal log-linear parameterizations.  No example has
been provided of a complete parameterization which is non-smooth.  In
Sections \ref{sec:smooth}, \ref{sec:fixed} and \ref{sec:cycles} we
will show that, in fact, many more complete parameterizations are
smooth than had previously been known.

The issue of smoothness in non-hierarchical models was raised by
\citet{forcina:12} in the context of loop models of conditional
independence, and expanded upon by \citet{colombi:14} for models of
context-specific conditional independence; the latter consider a more
general class of models than we do, but there is no overlap in the
theoretical results.  Examples of ordinary conditional independences
models that require non-complete parameterizations (and therefore are
not curved exponential families) are found in \citet{drton:09}.

\section{An Analytical Map between Margins} \label{sec:smooth}

To parameterize a marginal distribution $p_M$ we can use the marginal
log-linear parameters $\{\lambda_L^M : \emptyset \neq L \subseteq
M\}$.  An analogous result holds for conditional distributions: for
disjoint $A,B$ define
\[
\bs\lambda_{A|B} \equiv (\lambda_L^{AB} \,|\, L \subseteq A \cup B, L \cap A \neq \emptyset);
\]
in other words, all the MLL parameters for the margin $A \cup B$ whose
effect contains some element of $A$.  Then $\bs\lambda_{A|B}$
constitutes a smooth parameterization of the conditional distribution
$X_A \,|\, X_B$

A consequence of this is to aid us in understanding the relationship
between log-linear parameters defined within different margins.
Theorem 3 of \citet{br:02} shows that distinct MLL parameters
corresponding to the same effect in different margins (i.e.\
$\lambda_L^M$ and $\lambda_L^N$ with $M \neq N$) are linearly
dependent at certain points in the parameter space, and that therefore
no smooth parameterization can include two such parameters.  The
following theorem elucidates the exact relationship between such
parameters, and will later be used to demonstrate the smoothness of
certain non-hierarchical parameterizations.

\begin{thm} \label{thm:linear} Let $A,M$ be disjoint subsets of $V$.
  The log-linear parameter $\lambda_L^{MA}$ may be decomposed as
\begin{align}
\lambda_L^{MA} = \lambda_L^{M} + f(\bs\lambda_{A|M}), \label{eqn:equal}
\end{align}
for a smooth function $f$, which vanishes whenever $X_A \indep X_v
\,|\, X_{M \setminus \{v\}}$ for some $v \in L$.

In addition, if $K \cap (V \setminus M) \neq \emptyset$
\begin{align}
\frac{\partial \lambda_L^{M}}{\partial \eta_K} = 
2^{-|M|} \sum_{x_V} (-1)^{|x_{K \triangle L}|} p(x_{V \setminus M} \,|\, x_M),
\label{eqn:deriv}
\end{align}
(where $(\eta_J : J \neq K)$ are held fixed).
\end{thm}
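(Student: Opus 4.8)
The plan is to establish the two displays separately, reducing each to the defining Möbius-type sums and the parity identity $\sum_{x_v \in \{0,1\}}(-1)^{x_v} = 0$. For the decomposition (\ref{eqn:equal}), I would begin from the definition of $\lambda_L^{MA}$ and, since $M$ and $A$ are disjoint, split the sum over $x_{MA}$ into an outer sum over $x_M$ and an inner sum over $x_A$, writing $2^{-|MA|} = 2^{-|M|}2^{-|A|}$; because $L \subseteq M$ the sign $(-1)^{|x_L|}$ is constant in $x_A$. Factoring $\log p_{MA}(x_M,x_A) = \log p_M(x_M) + \log p_{A|M}(x_A\,|\,x_M)$ then splits $\lambda_L^{MA}$ into two pieces: the $\log p_M$ piece, whose inner average over $x_A$ is trivial, is exactly $\lambda_L^M$, and the remainder is
\[
f = 2^{-|M|-|A|}\sum_{x_M}\sum_{x_A}(-1)^{|x_L|}\log p_{A|M}(x_A\,|\,x_M).
\]
Since $f$ depends on $\bs p$ only through the conditional law $p_{A|M}$, and $\bs\lambda_{A|M}$ smoothly parameterizes $X_A\,|\,X_M$ with $p_{A|M}>0$, the map $\bs\lambda_{A|M}\mapsto\log p_{A|M}$ is smooth and hence $f$ is a smooth function of $\bs\lambda_{A|M}$ alone. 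For the vanishing claim I would fix $v\in L$ and assume $\CI{X_A}{X_v}{X_{M\setminus\{v\}}}$, so that $p_{A|M}(x_A\,|\,x_M)$ and therefore its inner average $g(x_M) := 2^{-|A|}\sum_{x_A}\log p_{A|M}(x_A\,|\,x_M)$ are independent of $x_v$; separating $(-1)^{|x_L|} = (-1)^{x_v}(-1)^{|x_{L\setminus\{v\}}|}$ and summing over $x_v$ then annihilates $f$ via $\sum_{x_v}(-1)^{x_v}=0$.

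For the derivative (\ref{eqn:deriv}), I would differentiate $\lambda_L^M = 2^{-|M|}\sum_{x_M}(-1)^{|x_L|}\log p_M(x_M)$ through $p_M(x_M) = \sum_{x_{V\setminus M}}p_V(x_V)$ by the chain rule. From the log-linear expansion $\log p_V(x_V) = \sum_J (-1)^{|x_J|}\eta_J$ one reads off $\partial p_V/\partial\eta_K = (-1)^{|x_K|}p_V$. Substituting, cancelling $p_V(x_V)/p_M(x_M) = p(x_{V\setminus M}\,|\,x_M)$, and collapsing the two signs through $(-1)^{|x_K|+|x_L|} = (-1)^{|x_{K\triangle L}|}$ (the exponents differing by the even number $2\sum_{v\in K\cap L}x_v$) would deliver (\ref{eqn:deriv}).

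The step needing care is the normalization constraint: in the parameterization $(\eta_J : \emptyset\neq J\subseteq V)$ the constant $\eta_\emptyset$ is not free but fixed by $\sum_{x_V}p_V = 1$, so a change in $\eta_K$ drags $\eta_\emptyset$ along and adds a term $(\partial\eta_\emptyset/\partial\eta_K)\,p_V$, constant in $x_V$, to $\partial p_V/\partial\eta_K$. I would dispose of this by observing that $\lambda_L^M$ is invariant under rescaling $\bs p\mapsto c\,\bs p$, since such a rescaling shifts $\log p_M$ by $\log c$ and contributes $\log c\cdot 2^{-|M|}\sum_{x_M}(-1)^{|x_L|} = 0$ for $L\neq\emptyset$; hence any $x_V$-independent additive term in $\partial\log p_V/\partial\eta_K$ is immaterial, and one may compute as if $\eta_\emptyset$ were held fixed. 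This is the main obstacle, and once cleared the rest is bookkeeping. Finally, I would remark that when $K\subseteq M$ the right-hand side of (\ref{eqn:deriv}) collapses (summing $p(x_{V\setminus M}\,|\,x_M)$ over $x_{V\setminus M}$) to $\Ind\{K=L\}$, so the hypothesis $K\cap(V\setminus M)\neq\emptyset$ only excludes this degenerate case.
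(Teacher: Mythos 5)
Your proposal is correct and follows essentially the same route as the paper: split $\log p_{MA}$ into $\log p_M + \log p_{A|M}$ to obtain (\ref{eqn:equal}), then differentiate the M\"obius sum for $\lambda_L^M$ through $p_M = \sum_{x_{V\setminus M}} p_V$ using $\partial p_V/\partial \eta_K = (-1)^{|x_K|}p_V$ to obtain (\ref{eqn:deriv}). The only substantive differences are that you prove the vanishing claim directly (the paper cites Lemma 2.9 of Evans, 2013) and that you explicitly address the $\eta_\emptyset$ normalization issue via scale-invariance of $\lambda_L^M$ for $L\neq\emptyset$, a point the paper passes over silently; both additions are correct.
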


\begin{proof}
We have
\begin{align*}
\lambda_L^{MA} &= 2^{-|AM|} \sum_{x_{MA}} (-1)^{|x_L|} \log p(x_{MA})\\
  &= 2^{-|AM|} \!\! \sum_{x_{MA}} (-1)^{|x_L|} \left[\log p(x_M) + 
		   \log p(x_{A} \,|\, x_M)\right]\\
  &= 2^{-|M|} \sum_{x_{M}} (-1)^{|x_L|} \log p(x_M) + 
		  2^{-|AM|} \sum_{x_{MA}} (-1)^{|x_L|} \log p(x_{A} \,|\, x_M)\\
  &= \lambda_L^{M} + 
		  2^{-|AM|} \sum_{x_{MA}} (-1)^{|x_L|} \log p(x_{A} \,|\, x_M).
\end{align*}
Since the second term is a smooth function of the conditional
probabilities $p(x_A \,|\, x_M)$, it follows that it is also a smooth
function of the claimed parameters.  The implication of independence
follows from Lemma 2.9 of \citet{evans:13}.

Now,
\begin{align*}
\frac{\partial}{\partial \eta_K} p(x_V) &= \frac{\partial}{\partial \eta_K} \exp\left(\sum_{J \subseteq V} (-1)^{|x_J|} \eta_J \right)
= (-1)^{|x_K|} p(x_V),
\end{align*}
and similarly 
\begin{align*}
\frac{\partial}{\partial \eta_K} p(x_M) &= \frac{\partial}{\partial \eta_K} \sum_{y_{V \setminus M}} p(y_{V\setminus M}, x_M) = (-1)^{|x_{K \cap M}|} \sum_{y_{V \setminus M}} (-1)^{|y_{K \setminus M}|} p(y_{V\setminus M}, x_M).
\end{align*}
Hence the derivative of (\ref{eqn:equal}) in the case $A = V \setminus
M$ becomes
\begin{align*}
\frac{\partial f}{\partial \eta_K} &= 
2^{-|V|} \sum_{x_{V}} (-1)^{|x_L|} \left\{ (-1)^{|x_K|} \frac{p(x_V)}{p(x_V)} - \frac{(-1)^{|x_{K \cap M}|}}{p(x_M)} \sum_{y_{V \setminus M}} (-1)^{|y_{K \setminus M}|} p(y_{V\setminus M}, x_M) \right\}\\
&= 0 - 2^{-|V|} \sum_{x_{V}} (-1)^{|x_{L}|+|x_{K \cap M}|}  \sum_{y_{V \setminus M}} (-1)^{|y_{K \setminus M}|} p(y_{V \setminus M} \,|\, x_M)
\intertext{and, since there is no dependence upon $x_{V \setminus M}$, this is the same as}
&= - 2^{-|V|} 2^{|V \setminus M|} \sum_{x_{V}} (-1)^{|x_{L}|+|x_{K \cap M}|+|x_{K \setminus M}|} p(x_{V \setminus M} \,|\, x_M).
\end{align*}
Then note that $|x_{L}|+|x_{K \cap M}|+|x_{K \setminus M}| = |x_L| +
|x_K|$ simply counts the number of 1s in $L$ and in $K$, so $|x_{L
  \triangle K}|$ is even if and only if $|x_L| + |x_K|$ is.  Hence
\begin{align*}
\frac{\partial f}{\partial \eta_K} 
&= - 2^{-|M|} \sum_{x_{V}} (-1)^{|x_{L \triangle K}|} p(x_{V \setminus M} \,|\, x_M),
\end{align*}
which gives the required result.
\end{proof}

\begin{rmk}
  We have shown that if the conditional distribution of $X_A$ given
  $X_M$ is fixed the relationship between $\lambda_L^M$ and
  $\lambda_L^{MA}$ (and indeed any parameter of the form
  $\lambda_L^{MB}$ for $B \subseteq A$) is linear.  In particular, if
  we \emph{know} $p_{A|M}$, then $\lambda_L^{MA}$ and $\lambda_L^{M}$
  become interchangeable as part of a parameterization, preserving
  smoothness and (when relevant) variation independence.
\end{rmk}

\subsection{Constructing Smooth Parameterizations}

The following example shows how Theorem \ref{thm:linear} can be used
to prove the smoothness of a parameterization.

\begin{exm}
Consider the complete collections $\mathcal{P}$ and $\mathcal{Q}$ below.
\begin{align*}
\mathcal{P}:&\bmllp
3&3\\
23&23\\
123&1, 2, 12, 13, 123.\\
\emllp && 
\mathcal{Q}:
\bmllp
3&3\\
23&2, 23\\
123&1, 12, 13, 123\\
\emllp.
\end{align*}
$\mathcal{P}$ is not hierarchical because in any inclusion-respecting
ordering the margin 23 must precede 123, in which case the effect 2
(contained in the pair $(2,123)$) is not associated with the first
margin of which it is a subset.  Existing results therefore cannot
tell us whether or not $\bs\lambda_{\mathcal{P}}$ is smooth.  However,
by fixing the parameters $\bs\lambda_{1|23} = (\lambda_1^{123},
\lambda_{12}^{123}, \lambda_{13}^{123}, \lambda_{123}^{123})$ Theorem
\ref{thm:linear} shows that $\lambda_{2}^{123}$ and $\lambda_{2}^{23}$
are interchangeable.  Hence $\bs\lambda_{\mathcal{P}}$ is smooth if
and only if $\bs\lambda_{\mathcal{Q}}$ is also smooth which, since
$\mathcal{Q}$ satisfies the conditions of a hierarchical
parameterization, it is.  In addition, $\bs\lambda_{\mathcal{P}}$ and
$\bs\lambda_{\mathcal{Q}}$ are both variation independent
parameterizations (i.e.\ any $\bs\lambda_{\mathcal{P}} \in \reals^7$
corresponds to a valid probability distribution).
\end{exm}

We generalize the approach used in the preceding example with the
following definition and proposition.

\begin{dfn}
Let $\mathcal{P}$ be a collection of MLL parameters, and define 
\[
\mathcal{P}_{-v} = \{(L, M \setminus \{v\}) \,|\, (L, M) \in \mathcal{P}, \, v \notin L \}.
\]
That is, all effects involving $v$ are removed, and any margins $M$
containing $v$ are replaced by $M \setminus \{v\}$.
\end{dfn}

\begin{prop} \label{prop:reduce} Let $\mathcal{P}$ be a complete
  collection of marginal log-linear parameters over $V$ such that the
  variable $v$ is not in any margin except $V$.  Then
  $\bs\lambda_{\mathcal{P}}$ is a smooth parameterization of $X_V$ if and
  only if $\bs\lambda_{\mathcal{P}_{-v}}$ is a smooth parameterization of
  $X_{V \setminus v}$.  In addition, $\bs\lambda_{\mathcal{P}}$ is
  variation independent if and only if $\bs\lambda_{\mathcal{P}_{-v}}$ is.
\end{prop}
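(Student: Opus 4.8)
The plan is to split $\bs\lambda_{\mathcal{P}}$ into a block that parameterizes the conditional $X_v \mid X_{V \setminus v}$ and a block that parameterizes the marginal $X_{V \setminus v}$, to observe that the conditional block is automatically a smooth, variation independent parameterization, and hence to reduce everything to the marginal block, which is exactly $\bs\lambda_{\mathcal{P}_{-v}}$. First I would sort the pairs of $\mathcal{P}$. Since $v$ lies in no margin but $V$, any pair $(L,M)$ with $v \in L$ forces $M = V$; these pairs are precisely $\{(L,V) : v \in L\}$, and their parameters are $\{\eta_L : v \in L\} = \bs\lambda_{v \mid V \setminus v}$, which (as recalled at the start of Section \ref{sec:smooth}) smoothly parameterize the conditional $X_v \mid X_{V \setminus v}$. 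The remaining pairs all have $v \notin L$, and completeness of $\mathcal{P}$ forces each non-empty $L \subseteq V \setminus \{v\}$ to occur exactly once among them, so $\mathcal{P}_{-v}$ is complete over $V \setminus \{v\}$; the two blocks have sizes $2^{|V|-1}$ and $2^{|V|-1}-1$, summing to $k$.

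Next I would apply Theorem \ref{thm:linear} with $A = \{v\}$ and $M = V \setminus \{v\}$. For each pair $(L,V) \in \mathcal{P}$ with $v \notin L$, (\ref{eqn:equal}) gives $\lambda_L^V = \lambda_L^{V \setminus v} + f_L(\bs\lambda_{v \mid V \setminus v})$ with $f_L$ smooth. Holding the conditional coordinates $\bs\lambda_{v \mid V \setminus v}$ fixed and subtracting $f_L$ from each such coordinate defines a triangular self-map $\Phi$ of $\reals^k$; its inverse simply adds $f_L$ back. Because conditional MLL parameterizations are variation independent, each $f_L$ is smooth on all of $\reals^{2^{|V|-1}}$, so $\Phi$ is a global diffeomorphism. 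Writing $\mathcal{P}'$ for the collection obtained from $\mathcal{P}$ by replacing every $(L,V)$ with $v \notin L$ by $(L, V \setminus v)$, we then have $\Phi \circ \bs\lambda_{\mathcal{P}} = \bs\lambda_{\mathcal{P}'} = (\bs\lambda_{v \mid V \setminus v},\, \bs\lambda_{\mathcal{P}_{-v}})$, and by the remark following Theorem \ref{thm:linear} the maps $\bs\lambda_{\mathcal{P}}$ and $\bs\lambda_{\mathcal{P}'}$ are simultaneously smooth parameterizations and simultaneously variation independent.

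Finally I would invoke the factorization diffeomorphism $p_V \mapsto (p_{V \setminus v},\, p_{v \mid V \setminus v})$ onto a product of open simplices. Under it, $\bs\lambda_{v \mid V \setminus v}$ is a smooth parameterization of the conditional factor, mapping onto $\reals^{2^{|V|-1}}$, while every parameter of $\bs\lambda_{\mathcal{P}_{-v}}$ is a function of marginals lying within $V \setminus \{v\}$, hence of the marginal factor $p_{V \setminus v}$ alone. The combined map $\bs\lambda_{\mathcal{P}'}$ is therefore block triangular, with a full-rank and surjective conditional block; consequently it is a homeomorphism with everywhere full-rank Jacobian if and only if $\bs\lambda_{\mathcal{P}_{-v}}$ is, and its image is a Cartesian product (i.e.\ it is variation independent) if and only if the image of $\bs\lambda_{\mathcal{P}_{-v}}$ is all of $\reals^{2^{|V|-1}-1}$. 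Chaining this equivalence through $\Phi$ yields the proposition.

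The two points that demand care are, first, that $\Phi$ is genuinely a diffeomorphism of the entire parameter space — which rests on variation independence of the conditional block guaranteeing that the correction $f$ extends smoothly over the whole of $\reals^{2^{|V|-1}}$ — and, second, that each coordinate of $\bs\lambda_{\mathcal{P}_{-v}}$ really depends only on $p_{V \setminus v}$, so that the factorization makes the Jacobian block triangular. Granting these, the product/block-triangular structure delivers both the smoothness and the variation-independence equivalences at once, and everything else is the bookkeeping described above.
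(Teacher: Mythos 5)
Your proposal is correct and follows essentially the same route as the paper's proof: split off the conditional block $\bs\lambda_{v|V\setminus v}$, apply Theorem \ref{thm:linear} with $A=\{v\}$ to translate the remaining effects from margin $V$ into margin $V\setminus\{v\}$, and exploit the parameter cut $(p_{V\setminus v},\, p_{v|V\setminus v})$ to deliver both the smoothness and the variation-independence equivalences. The explicit triangular map $\Phi$ and the block structure of the Jacobian that you describe are simply a more detailed rendering of the same argument.
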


\begin{proof}
  Since $V$ is the only margin containing $v$ and the parameterization
  is complete, we have the parameters $\bs\lambda_{v|V\setminus v} =
  (\lambda_{L}^V : v \in L)$.  Hence we can smoothly parameterize the
  distribution of $X_v \,|\, X_{V \setminus v}$ with these parameters.

  By Theorem \ref{thm:linear}, any other parameter $\lambda_L^V$ such
  that $v \notin L$ is (having fixed the distribution of $X_v \,|\,
  X_{V \setminus v}$) a smooth function of $\lambda_L^{V \setminus
    v}$.  It follows that we have a smooth map between
  $\bs\lambda_{\mathcal{P}}$ and $(\bs\lambda_{\mathcal{P}_{-v}},
  \bs\lambda_{v|V\setminus v})$.  Since
  $\bs\lambda_{\mathcal{P}_{-v}}$ is a function of $p_{V\setminus v}$,
  and $\bs\lambda_{v|V\setminus v}$ smoothly parameterizes
  $p_{v|V\setminus v}$, it follows that $\bs\lambda_{\mathcal{P}}$
  smoothly parameterizes $p_V$ if and only if
  $\bs\lambda_{\mathcal{P}_{-v}}$ smoothly parameterizes $p_{V
    \setminus v}$.

  Lastly, the two pieces $\bs\lambda_{\mathcal{P}_{-v}}$ and
  $\bs\lambda_{v|V\setminus v}$ are variation independent of one
  another because this is a parameter cut, and parameters within
  $\bs\lambda_{v|V\setminus v}$ are all variation independent since
  they are just ordinary log-linear parameters; therefore
  $\bs\lambda_{\mathcal{P}_{-v}}$ is variation independent if and only
  if $\bs\lambda_{\mathcal{P}}$ is.
\end{proof}

\begin{cor} \label{cor:nested} Any complete parameterization in which
  the margins are strictly nested ($M_1 \subset M_2 \subset \cdots
  \subset M_k = V$) is smooth and variation independent.
\end{cor}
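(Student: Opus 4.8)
The plan is to prove the statement by induction on $|V|$, using Proposition \ref{prop:reduce} to strip away one variable at a time from the top margin. The key observation is that strict nesting $M_1 \subset \cdots \subset M_k = V$ guarantees that any $v \in V \setminus M_{k-1}$ lies in no margin other than $M_k = V$, which is exactly the hypothesis needed to apply the proposition. The smoothness and variation-independence content is thereby carried entirely by Proposition \ref{prop:reduce}, with the induction reducing to a trivial base case.

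For the terminal case I would take a collection with a single margin $k = 1$: here completeness forces $\bs\lambda_{\mathcal{P}} = (\lambda_L^V : \emptyset \neq L \subseteq V) = (\eta_L : \emptyset \neq L \subseteq V)$, the standard log-linear parameterization, which is known to be smooth with $\Theta = \reals^k$ and variation independent. In particular this covers the base case $|V| = 1$, where completeness forces $\mathcal{P} = \{(V,V)\}$ and $\bs\lambda_{\mathcal{P}}$ is the single parameter $\eta_V$.

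For the inductive step, assume $|V| = n \geq 2$ and that the result holds for all strictly nested complete parameterizations over sets of size less than $n$. If $k = 1$ we are in the terminal case. Otherwise $k \geq 2$, and since $M_{k-1} \subsetneq V$ we may choose $v \in V \setminus M_{k-1}$; by strict nesting $v$ appears in no $M_i$ with $i < k$, so the only margin containing $v$ is $V$, and Proposition \ref{prop:reduce} gives that $\bs\lambda_{\mathcal{P}}$ is smooth (resp.\ variation independent) if and only if $\bs\lambda_{\mathcal{P}_{-v}}$ is. It then remains to verify that $\mathcal{P}_{-v}$ is again a strictly nested complete parameterization, now over $V \setminus \{v\}$, so that the induction hypothesis applies: completeness survives because each effect $L \subseteq V \setminus \{v\}$ retains its unique pair $(L, M \setminus \{v\})$ while the effects containing $v$ are discarded, so every nonempty subset of $V \setminus \{v\}$ still appears exactly once; and since $v$ lies only in $M_k = V$, removing it leaves the nested sequence $M_1 \subseteq \cdots \subseteq M_{k-1} \subseteq V \setminus \{v\}$. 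Applying the induction hypothesis to $\mathcal{P}_{-v}$ and combining with the equivalence from Proposition \ref{prop:reduce} closes the induction.

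The only place requiring care is the bookkeeping of the margins under this reduction, and in particular the collapse of the top two margins: when $V \setminus M_{k-1} = \{v\}$ we have $V \setminus \{v\} = M_{k-1}$, so the chain shortens to $k-1$ distinct margins. I would note that after discarding this single possible repetition the chain remains \emph{strictly} nested over a set of size $n-1$, which is exactly the form needed to invoke the induction hypothesis; all other cases leave $k$ distinct strictly nested margins intact.
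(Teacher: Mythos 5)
Your proof is correct and is exactly the argument the paper intends: the corollary is stated as an immediate consequence of Proposition \ref{prop:reduce}, obtained by iterating that proposition to strip one variable of $V \setminus M_{k-1}$ at a time, which is what your induction on $|V|$ formalizes (including the correct handling of the base case and the possible collapse of the top two margins).
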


Lemma 6 of \citet{forcina:12} deals with the special case $k=2$, which
to our knowledge was the only prior result showing that a
non-hierarchical MLL parameterization may be smooth.

\begin{exm}
Consider
\begin{align*}
&\mathcal{P}: \bmllp
13&3\\
23&23\\
123&1, 2, 12, 13, 123\\
\emllp && 
\mathcal{Q}: \bmllp
13&3\\
123&1, 2, 12, 23, 13, 123\\
\emllp.
\end{align*}
$\mathcal{P}$ does not satisfy the conditions of Proposition
\ref{prop:reduce}; however, applying Theorem \ref{thm:linear} shows
that $\lambda_{23}^{23}$ is just a linear function of
$\lambda_{23}^{123}$ after fixing the other parameters in the margin
$123$, so $\mathcal{P}$ is smooth if and only if $\mathcal{Q}$ is.
Applying Corollary \ref{cor:nested} shows that
$\bs\lambda_{\mathcal{Q}}$ (and therefore $\bs\lambda_{\mathcal{P}}$) is smooth.
\end{exm}

\begin{prop} \label{prop:cond} 
  Let $\mathcal{P}$ be a complete parameterization, and suppose that
  for some $v \in V$, and every $A \subseteq V \setminus \{v\}$, the
  sets $A \cup \{v\}$ and $A$ appear as effects within the same margin
  in $\mathcal{P}$.

  Then $\bs\lambda_{\mathcal{P}}$ is a smooth parameterization of $X_V$
  if and only if $\bs\lambda_{\mathcal{P}_{-v}}$ is a smooth
  parameterization of $X_{V \setminus v}$.  In addition,
  $\bs\lambda_{\mathcal{P}}$ is variation independent if and only if
  $\bs\lambda_{\mathcal{P}_{-v}}$ is variation independent.
\end{prop}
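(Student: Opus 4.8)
The plan is to reduce this proposition to Proposition~\ref{prop:reduce} by showing that $\bs\lambda_{\mathcal{P}}$ is equivalent --- in the sense of inducing the same smoothness and variation independence --- to the parameter cut $(\bs\lambda_{\mathcal{P}_{-v}}, \bs\lambda_{v|V\setminus v})$, where $\bs\lambda_{v|V\setminus v} = (\lambda_{A\cup\{v\}}^V : A\subseteq V\setminus v)$. The point of this target is that it splits cleanly: $\bs\lambda_{v|V\setminus v}$ is a smooth bijective parameterization of the conditional $p_{v|V\setminus v}$ for every value of the marginal (indeed its entries are the M\"obius coefficients of the conditional log-odds $\logit p(X_v = 0 \,|\, x_{V\setminus v})$, hence functions of $p_{v|V\setminus v}$ alone), while $\bs\lambda_{\mathcal{P}_{-v}}$ is a function of $p_{V\setminus v}$ alone. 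Hence $(\bs\lambda_{\mathcal{P}_{-v}}, \bs\lambda_{v|V\setminus v})$ is a smooth (resp.\ variation independent) parameterization of $X_V$ if and only if $\bs\lambda_{\mathcal{P}_{-v}}$ is one of $X_{V\setminus v}$, exactly as required. It therefore suffices to exhibit a cut-preserving diffeomorphism between $\bs\lambda_{\mathcal{P}}$ and $(\bs\lambda_{\mathcal{P}_{-v}}, \bs\lambda_{v|V\setminus v})$.

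To construct this map I would use Theorem~\ref{thm:linear} twice, once on each type of effect, writing $M_A$ for the common margin of the pair $A, A\cup\{v\}$ and $M_A' = M_A\setminus\{v\}$. For a $v$-effect, Theorem~\ref{thm:linear} applied with margin $M_A$ and additional variables $V\setminus M_A$ gives $\lambda_{A\cup\{v\}}^V = \lambda_{A\cup\{v\}}^{M_A} + f_A$, with $f_A$ a smooth function of the conditional $p_{(V\setminus M_A)|M_A}$; for a non-$v$-effect it gives $\lambda_A^{M_A} = \lambda_A^{M_A'} + g_A$, with $g_A$ a smooth function of $p_{v|M_A'}$. By the Remark following Theorem~\ref{thm:linear}, each identity is an affine, hence invertible, relabelling once the relevant conditional is held fixed. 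The dimensions match: the $2^{|V|-1}-1$ non-$v$-effects and the $2^{|V|-1}$ $v$-effects give $2^{|V|}-1$ coordinates on each side.

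The argument then proceeds in two stages. First I would show that, with $p_{V\setminus v}$ held fixed, the $v$-effects $(\lambda_{A\cup\{v\}}^{M_A})$ form a smooth bijective reparameterization of $\bs\lambda_{v|V\setminus v}$: by M\"obius inversion $\lambda_{A\cup\{v\}}^{M_A}$ is the effect-$A$ coefficient of $\logit p(X_v = 0 \,|\, x_{M_A'})$, and --- given $p_{V\setminus v}$ and the full-resolution coefficients attached to effects outside $M_A$, which in a suitable order have already been recovered --- this determines $\lambda_{A\cup\{v\}}^V$. This yields a unit-triangular, hence nonsingular, relation between the two blocks of $v$-effects. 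Second, once $p_{v|V\setminus v}$ is pinned down in this way, the identity $\lambda_A^{M_A} = \lambda_A^{M_A'} + g_A$ makes each non-$v$-effect interchangeable with its $\mathcal{P}_{-v}$ counterpart. Assembling the two stages gives the desired diffeomorphism, and variation independence is preserved because every interchange is affine in a single coordinate with its conditioning distribution fixed, i.e.\ respects the cut, exactly as in Proposition~\ref{prop:reduce}.

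The step I expect to be the main obstacle is the triangular recovery in the first stage: because the margins $M_A$ vary with $A$ and each correction $f_A$ depends on $p_{(V\setminus M_A)|M_A}$ --- which in general involves both $p_{V\setminus v}$ and the very conditional $p_{v|V\setminus v}$ being reconstructed --- the many simultaneous applications of Theorem~\ref{thm:linear} are coupled. The crux is to find an ordering of the effects (by inclusion, with $A = V\setminus v$ and margin $V$ as the base case, where $f_A = 0$) for which each correction depends only on coordinates already processed, so that the Jacobian of the combined reparameterization is triangular with unit diagonal and therefore invertible everywhere. Verifying that the ``same margin'' hypothesis is precisely what guarantees enough conditional information is available at each stage --- together with the bookkeeping for the effect $\{v\}$, i.e.\ the case $A = \emptyset$ --- is where the real work lies.
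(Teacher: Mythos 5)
You have correctly identified the crux, but your proposed resolution of it does not work, so there is a genuine gap. Your plan conditions $X_v$ on $X_{V\setminus v}$ and hopes that the system of identities $\lambda_{Av}^V = \lambda_{Av}^{M_A} + f_A$ can be ordered (by inclusion of effects) so that each correction $f_A$ depends only on coordinates already recovered, giving a unit-triangular Jacobian. In general no such ordering exists: $f_A$ depends on $\bs\lambda_{(V\setminus M_A)|M_A}$, i.e.\ on full-margin parameters $\lambda_{Bv}^V$ with $B \nsubseteq M_A\setminus\{v\}$, and such $B$ need not be comparable to $A$ under inclusion. Already in the paper's own Example \ref{exm:cond} (margins $12$, $13$, $123$ with $v=1$), relating $\lambda_{12}^{12}$ to $\lambda_{12}^{123}$ requires $p_{3|12}$, hence $\lambda_{13}^{123}$, while relating $\lambda_{13}^{13}$ to $\lambda_{13}^{123}$ requires $p_{2|13}$, hence $\lambda_{12}^{123}$ --- a cycle. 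This circularity is the whole difficulty with non-hierarchical parameterizations, and your first stage, which you yourself flag as ``where the real work lies,'' is exactly as hard as the original problem; it is asserted, not proved.

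The paper's proof sidesteps the coupling entirely by conditioning the other way, on $X_v$. For each pair $(A, A\cup\{v\})$ in the common margin $M\cup\{v\}$ it forms
\begin{align*}
\kappa_A^M(x_v) \;=\; \lambda_A^{Mv} + (-1)^{|x_v|}\,\lambda_{Av}^{Mv}
\;=\; 2^{-|M|}\sum_{y_M}(-1)^{|y_A|}\log p_{M|v}(y_M\,|\,x_v),
\end{align*}
which is precisely the MLL parameter for the pair $(A,M)$ computed in the conditional table $p_{V\setminus v|v}(\cdot\,|\,x_v)$. Thus for each fixed $x_v$ the transformed parameters form an instance of $\mathcal{P}_{-v}$ for that conditional distribution --- no ordering or triangularity argument is needed, because the change of variables $(\lambda_A^{Mv},\lambda_{Av}^{Mv}) \leftrightarrow (\kappa_A^M(0),\kappa_A^M(1))$ is an explicit invertible linear map within each margin. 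Smoothness of $\bs\lambda_{\mathcal{P}_{-v}}$ then recovers $p_{V\setminus v|v}$, the leftover effect $\{v\}$ recovers $p_v$ via (\ref{eqn:equal}), and the converse uses a dummy distribution with $\kappa_A^M(0)=\kappa_A^M(1)$. If you want to salvage your approach you would need to prove invertibility of the coupled system directly, which is essentially the conjecture the paper leaves open; the $\kappa$ substitution is the idea you are missing.
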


\begin{proof}
  Since $A \subseteq V \setminus \{v\}$ and $A \cup \{v\}$
  appear in the same margin, say $M \cup \{v\}$, set
\begin{align*}
\kappa_A^M(x_v) &= \lambda_A^{Mv} + (-1)^{|x_v|} \lambda_{Av}^{Mv}\\
  &= 2^{-|Mv|} \sum_{y_{Mv}} (-1)^{|y_A|} \left[(-1)^{|y_A|} + (-1)^{|y_{Av}| + |x_v|}\right] \log p_{M|v}(y_M, y_v)
\intertext{which is zero unless $x_v = y_v$, leaving}
  &= 2^{-|M|} \sum_{y_{M}} (-1)^{|y_A|} \log p_{M|v}(y_M, x_v)\\
  &= 2^{-|M|} \sum_{y_{M}} (-1)^{|y_A|} \log p_{M|v}(y_M \,|\, x_v).
\end{align*}
But notice this is of the same form as an MLL parameter for the pair
$(A, M)$ over the conditional distribution $p_{V\setminus v|v}(\cdot
\,|\, x_v)$.  It follows that for fixed $x_v$ the parameters $\left\{
  \kappa_A^M(x_v): (A,M) \in \mathcal{P}, v \notin A \right\}$ form a
complete MLL collection of the form $\mathcal{P}_{-v}$ for the
conditional distribution of $X_{V\setminus v} \,|\, X_v = x_v$.  If
$\bs\lambda_{\mathcal{P}_{-v}}$ is smooth then we can smoothly recover
the conditional distribution $p_{V \setminus v|v}$.  Furthermore, if
the effect $\{v\}$ is in a margin $N\cup \{v\}$, then using
(\ref{eqn:equal}) we obtain
\[
\lambda_v^{Nv} = \lambda_v^v + f(p_{N|v}),
\]
and smoothly recover $\lambda_v^v$.  In addition $\lambda_v^v$ is
variation independent of $p_{N|v}$ (since $p_v$, $p_{N|v}$ constitutes
a parameter cut) and has range $\reals$, so the same is true of
$\lambda_v^{Nv}$.

Conversely if $\bs\lambda_{\mathcal{P}}$ is smooth, then given
parameters $\bs\lambda_{\mathcal{P}_{-v}}$ we can set up a dummy distribution
on $p_V$ in which $\kappa_A^M(x_v)=\lambda_A^M$ for each $x_v$, and
$\lambda_v^N = 0$, thus smoothly recovering $p_{V \setminus v}$.
\end{proof}

\begin{exm} \label{exm:cond}
As an example, consider 
\begin{align*}
\mathcal{P}: \bmllp
12&2, 12\\
13&3, 13\\
123&1, 23, 123\\
\emllp
\end{align*}
which is not hierarchical, and nor does it satisfy the conditions of
Proposition \ref{prop:reduce}.  However it does satisfy the the
conditions of Proposition \ref{prop:cond} for $v=1$, and
$\mathcal{P}_{-1}=\{(2,2), (3,3), (23,23)\}$,
which is hierarchical and so certainly smooth.  Hence $\mathcal{P}$ 
represents a smooth parameterization.
\end{exm}


\section{Fixed Point Mappings} \label{sec:fixed}

The previous section gives analytical maps between some
parameterizations, but Propositions \ref{prop:reduce} and
\ref{prop:cond} only apply directly to a relatively small number of
cases.  In this section we build on these results by presenting
conditions for the existence of a smooth map, even without a closed
form expression.

Given a particular complete MLL parameterization $\bs
\lambda_{\mathcal{P}}$, the identity (\ref{eqn:equal}) in Theorem
\ref{thm:linear} can be written in vector form as 
\begin{align*} 
\bs\eta = \bs\lambda + \bs f(\bs \eta).  
\end{align*}
For a given $\bs\lambda$ this suggests that $\bs\eta$ might be
recovered using fixed point methods; the identity (\ref{eqn:deriv})
gives us information about the Jacobian of $\bs f$.

\begin{exm} \label{exm:psi}
Consider the parameterization based on
\begin{align*}
\mathcal{P}: \bmllp
23&2, 23\\
13&1\\
123&12, 3, 13, 123.\\
\emllp
\end{align*}
If we can smoothly recover $\eta_1$, $\eta_2$ and $\eta_{23}$ from
$\bs\lambda_{\mathcal{P}}$ then it follows that
$\bs\lambda_{\mathcal{P}}$ is a smooth parameterization.  From
(\ref{eqn:equal}) we have
\begin{align*}
\left(\begin{matrix}\eta_2\\\eta_{23} \end{matrix}\right) 
&= \left(\begin{matrix}\lambda_2^{23}\\\lambda_{23}^{23} \end{matrix}\right) + \bs f(\eta_{1},\eta_{12},\eta_{13},\eta_{123});\\
\intertext{since $\eta_{12}$, $\eta_{13}$ and $\eta_{123}$ are given in the parameterization 
we can assume these to be fixed, so abusing notation slightly}
\left(\begin{matrix}\eta_2\\\eta_{23} \end{matrix}\right) &=  \left(\begin{matrix}\lambda_2^{23}\\\lambda_{23}^{23} \end{matrix}\right) + \bs f(\eta_1) = \left(\begin{matrix}\lambda_2^{23} + f_1(\eta_1)\\\lambda_{23}^{23}  + f_2(\eta_1) \end{matrix}\right).
\end{align*}
Similarly,
$\eta_{1} = \lambda_1^{13} + g(\eta_{2}, \eta_{23})$ for some smooth $g$, so
$\eta_1$ is a solution to the equation
\begin{align*}
x &= \lambda_1^{13} + g(\lambda_2^{23} + f_1(x), \lambda_{23}^{23} + f_2(x))\\
&\equiv \Psi(x).
\end{align*}
If $\Psi$ can be shown to be a contraction mapping, then we are
guaranteed to find a unique solution, and therefore recover the joint
distribution.  In addition, if $\Psi$ is a contraction for all
$\bs\eta$, then since it varies smoothly in $\bs\eta$ we will have
shown that $\bs\lambda_{\mathcal{P}}$ is a smooth parameterization.
\end{exm}



Define $\epsilon = \min_{x_V} p(x_V)$ to be the smallest amount of
probability assigned to any cell in our joint distribution, and
$\Delta_{\epsilon} = \{p : \min_{x_V} p(x_V) \geq \epsilon \}$ to
be the probability simplex consisting of such distributions.  The
Jacobian of an otherwise smooth parameterization can become singular on
the boundary of the probability simplex, so it is useful to have
control over this quantity.

The next result allows us to control the magnitude of the columns (or
rows) of the Jacobian of $\Psi$ in certain examples.  The proof is
given in the appendix.

\begin{lem} \label{lem:mag}
Let $J \subseteq M$, and $\emptyset \neq K \subseteq V \setminus M$.  Then
\begin{align*}
\sum_{\emptyset \neq C \subseteq M} \left| \frac{\partial \lambda_{C}^{M}}{\partial \eta_{JK}} \right|^2 \leq 1 - \epsilon.
\end{align*}
Alternatively, if $\emptyset \neq C \subseteq M$, then
\begin{align*}
&\sum_{\substack{J \subseteq M}} \left| \frac{\partial \lambda_C^{M}}{\partial \eta_{JK}} \right|^2 \leq 1 - \epsilon.
\end{align*}
\end{lem}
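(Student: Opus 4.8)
The plan is to reduce everything to the derivative formula (\ref{eqn:deriv}) of Theorem \ref{thm:linear} and then recognize the resulting quantities as Walsh--Hadamard coefficients. Since $K \subseteq V \setminus M$ is nonempty, the differentiating index $J \cup K$ satisfies $(J \cup K) \cap (V \setminus M) = K \neq \emptyset$, so (\ref{eqn:deriv}) applies with effect $C$ and index $J \cup K$ in place of its $L$ and $K$:
\begin{align*}
\frac{\partial \lambda_C^M}{\partial \eta_{JK}} = 2^{-|M|} \sum_{x_V} (-1)^{|x_{(J \cup K) \triangle C}|}\, p(x_{V \setminus M} \,|\, x_M).
\end{align*}
First I would exploit disjointness: because $K \subseteq V \setminus M$ is disjoint from $J, C \subseteq M$, we have $(J \cup K) \triangle C = (J \triangle C) \cup K$ with the two pieces disjoint and $J \triangle C \subseteq M$. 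This lets me factor the sign and carry out the inner sum over $x_{V \setminus M}$, so that, setting
\begin{align*}
q_K(x_M) = \sum_{x_{V \setminus M}} (-1)^{|x_K|}\, p(x_{V \setminus M} \,|\, x_M),
\end{align*}
the derivative becomes $\partial \lambda_C^M / \partial \eta_{JK} = 2^{-|M|} \sum_{x_M} (-1)^{|x_{J \triangle C}|}\, q_K(x_M)$. This exhibits it as the Walsh coefficient of the single function $q_K$ on $\{0,1\}^M$, indexed by the subset $J \triangle C$.

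The next step is to apply Parseval's identity for the orthogonal Walsh basis $x_M \mapsto (-1)^{|x_D|}$, $D \subseteq M$, giving $\sum_{D \subseteq M} \bigl( 2^{-|M|} \sum_{x_M} (-1)^{|x_D|} q_K(x_M) \bigr)^2 = 2^{-|M|} \sum_{x_M} q_K(x_M)^2$. For the first inequality I fix $J$ and let $C$ range over the nonempty subsets of $M$; the map $C \mapsto J \triangle C$ is a bijection onto all subsets of $M$ that omits exactly $D = J$ (the image of $C = \emptyset$), so the sum over nonempty $C$ is at most the full Parseval sum. For the second inequality I fix $C \neq \emptyset$ and let $J$ range over all subsets; now $J \mapsto J \triangle C$ is a bijection onto all of $M$'s subsets, so the sum equals the full Parseval sum. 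Either way the left-hand side is bounded by $2^{-|M|} \sum_{x_M} q_K(x_M)^2$.

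What remains is to show $2^{-|M|} \sum_{x_M} q_K(x_M)^2 \leq 1 - \epsilon$, and I expect this positivity estimate to be the main obstacle. Writing $r = P(|X_K| \text{ odd} \,|\, X_M = x_M)$ gives $q_K(x_M) = 1 - 2r$ and hence $q_K(x_M)^2 = 1 - 4 r (1 - r)$. Because $\min_{x_V} p(x_V) = \epsilon$ and $p(x_M) \leq 1$, every full cell has conditional probability at least $\epsilon$; taking the configuration with all of $X_{V \setminus M}$ equal to zero gives $1 - r \geq \epsilon$, while taking the configuration with a single coordinate of $K$ (nonempty) equal to one gives $r \geq \epsilon$. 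Thus $r(1-r) \geq \epsilon(1-\epsilon)$, and since $\epsilon \leq 2^{-|V|} \leq \tfrac{3}{4}$ we have $4\epsilon(1-\epsilon) \geq \epsilon$, so that $q_K(x_M)^2 \leq 1 - \epsilon$ holds pointwise in $x_M$. Averaging then delivers the claimed bound. The only delicate points are the disjointness bookkeeping in splitting the symmetric difference and confirming that the two reindexings are genuine bijections; the probabilistic bound on $q_K$ is then routine.
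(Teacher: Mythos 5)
Your proposal is correct and follows essentially the same route as the paper: both identify the derivatives $\partial\lambda_C^M/\partial\eta_{JK}$ as Walsh--Hadamard coefficients of the alternating conditional sum $q_K(x_M)=\sum_{x_{V\setminus M}}(-1)^{|x_K|}p(x_{V\setminus M}\,|\,x_M)$, invoke orthogonality of the Walsh basis (your Parseval step is exactly the paper's Lemma \ref{lem:alt}), and finish with a pointwise bound $|q_K|\le 1-\epsilon$ using $K\neq\emptyset$ and $\min_{x_V}p(x_V)=\epsilon$. Your handling of the reindexings $C\mapsto J\triangle C$ and $J\mapsto J\triangle C$, and your explicit $1-4r(1-r)$ estimate, are if anything slightly more careful than the paper's terse versions of the same steps.
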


\begin{exm} \label{exm:psi2} Returning to the parameterization in
  Example \ref{exm:psi}, the derivative of $\Psi$ is
\begin{align*}
\Psi'(x) 
&= \frac{\partial \lambda_1^{13}}{\partial \eta_{2}} \frac{\partial \lambda_2^{23}}{\partial \eta_{1}} 
+ \frac{\partial \lambda_1^{13}}{\partial \eta_{23}} \frac{\partial \lambda_{23}^{23}}{\partial \eta_{1}},
\end{align*}
which is the dot product of the vectors
\begin{align*}
&\left( \frac{\partial \lambda_1^{13}}{\partial \eta_{2}}, \frac{\partial \lambda_1^{13}}{\partial \eta_{23}} \right)^T && 
\left( \frac{\partial \lambda_2^{23}}{\partial \eta_{1}}, \frac{\partial \lambda_{23}^{23}}{\partial \eta_{1}} \right)^T.
\end{align*}
By applying the two parts of Lemma \ref{lem:mag}, these vectors each
have magnitude at most $1 - \epsilon$.  Hence $|\Psi'(x)| \leq
1-\epsilon$, and $\Psi$ is a contraction on $\Delta_\epsilon$ for
every $\epsilon > 0$.  It follows that the equation has a unique
solution among all positive probability distributions (and this can be
found by iteratively applying $\Psi$ to any initial distribution), and
by the inverse function theorem it is a smooth function of $\bs
\lambda$.  Hence $\bs\lambda_{\mathcal{P}}$ is indeed smooth.
\end{exm}

\begin{rmk}
  \citet{forcina:12} also uses fixed point methods to recover
  distributions from marginal log-linear parameters, but that approach
  involves computing probabilities directly.  We discuss those methods
  in Section \ref{sec:cycles}.
\end{rmk}

Lemma \ref{lem:mag} enables us to formulate the following
generalization of the idea used in the example above.

\begin{lem} \label{lem:fixedpoint}
  Let $\mathcal{P}$ be complete and such that for any $(L,M) \in
  \mathcal{P}$ with $M \subset V$, there is at most one other
  margin $N \subset V$ in $\mathcal{P}$ with $L \cap (V \setminus
  N) \neq \emptyset$.  Then $\bs\lambda_{\mathcal{P}}$ is smooth.
\end{lem}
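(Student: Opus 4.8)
The plan is to phrase the recovery of $p_V$ from $\bs\lambda_{\mathcal P}$ as a fixed-point problem in the full log-linear parameters $\bs\eta$ and to show the associated map is a contraction on $\Delta_\epsilon$ for every $\epsilon>0$, exactly as in Examples~\ref{exm:psi}--\ref{exm:psi2}. For each $(L,M)\in\mathcal P$ with $M\subset V$, Theorem~\ref{thm:linear} applied with $A=V\setminus M$ gives $\eta_L=\lambda_L^{M}+f_L(\bs\eta)$, where $f_L$ depends on $\bs\eta$ only through the conditional law of $X_{V\setminus M}$ given $X_M$, i.e.\ only through those $\eta_J$ with $J\cap(V\setminus M)\neq\emptyset$; the pairs with $M=V$ give $\eta_L=\lambda_L^{V}$ directly. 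Treating the prescribed values $\lambda_L^{M}$ as constants, this defines a map $\Phi$ on the unknowns $\bs\eta_U=(\eta_L:M_L\subset V)$ whose fixed point recovers all of $\bs\eta$, and hence $p_V$.

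Next I would differentiate $\Phi$ using \eqref{eqn:deriv}, so that $(D\Phi)_{L,J}=-\partial\lambda_L^{M_L}/\partial\eta_J$, which is nonzero only when $J\not\subseteq M_L$. Grouping the unknowns by their margin, two structural facts emerge. First, the diagonal blocks vanish, since an effect $J$ assigned to the same margin as $L$ satisfies $J\subseteq M_L$. Second---and this is where the hypothesis enters---an unknown $\eta_J$ affects the equations of a margin $N$ precisely when $J\not\subseteq N$, and by assumption this happens for at most one margin $N$ other than $J$'s own; hence each column of $D\Phi$ is supported in a single off-diagonal block. Consequently the ``influence digraph'' on the margins is sparse, and the recovery can be organised around a small feedback set, reducing $\Phi$ to a fixed point in few coordinates, just as the worked examples reduce to the scalar map $\Psi$.

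For the contraction estimate, each elementary factor $\partial\lambda_C^{M}/\partial\eta_{JK}$ is controlled by Lemma~\ref{lem:mag}: the sum of its squares over effects $C$ (index fixed) and over internal indices $J$ (effect and external part $K$ fixed) is each at most $1-\epsilon$. The plan is to bound the Jacobian of the reduced map by writing each entry as a sum of products of such factors along arcs of the influence digraph, and applying Cauchy--Schwarz so that the fixed-index half of Lemma~\ref{lem:mag} for one factor pairs with the fixed-effect half for the adjacent factor, exactly as the two vectors are paired in Example~\ref{exm:psi2}; around any cycle this produces a product of terms each at most $\sqrt{1-\epsilon}$, giving a constant strictly below $1$ uniformly on $\Delta_\epsilon$. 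The main obstacle is making this uniform when several effects with distinct external index sets $K$ feed a common margin: then the one-step Euclidean operator norm of $D\Phi$ itself can exceed $1$, so the bound must be extracted from the composed, feedback-reduced map rather than from $D\Phi$ directly, and the pairing of the two halves of Lemma~\ref{lem:mag} must be arranged to absorb these branchings without the constant drifting above $1$. The hypothesis that each effect is external to at most one further margin is exactly what keeps this accumulation under control.

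Finally, once $\Phi$ (or its reduction) is shown to be a contraction on $\Delta_\epsilon$ with constant bounded away from $1$, the Banach fixed-point theorem yields a unique solution $\bs\eta$, obtainable by iteration from any starting distribution. Since the map varies smoothly in $\bs\lambda_{\mathcal P}$ and the contraction is uniform, the inverse function theorem shows that $\bs\eta$---and therefore $p_V$---is a twice continuously differentiable function of $\bs\lambda_{\mathcal P}$ with full-rank Jacobian. Letting $\epsilon\downarrow 0$ exhausts the open simplex, so $\bs\lambda_{\mathcal P}$ is a smooth parameterization.
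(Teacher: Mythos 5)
Your overall strategy is the paper's own: stack the identities $\eta_L = \lambda_L^{M} + f_L(\bs\eta)$ obtained from Theorem~\ref{thm:linear}, observe that the hypothesis confines each column of the Jacobian to the single block of rows belonging to the one margin $N$ that fails to contain $L$, and control that column with Lemma~\ref{lem:mag}. Up to this point you match the paper's proof exactly, and your structural observations (vanishing diagonal blocks, one off-diagonal block per column) are correct.

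The gap is that you never actually establish the contraction, which is the entire analytic content of the lemma. You correctly note that when a margin $N$ receives several unknown effects with distinct external parts $K = L \setminus N$, the one-step Euclidean operator norm of $D\Phi$ need not be below $1$ on $\Delta_\epsilon$, and you propose to rescue the argument by passing to a composed, feedback-reduced map and pairing the two halves of Lemma~\ref{lem:mag} by Cauchy--Schwarz as in Example~\ref{exm:psi2}. But you then only assert that this pairing ``must be arranged to absorb these branchings without the constant drifting above~1'' and that the hypothesis ``is exactly what keeps this accumulation under control.'' Which coordinates form the feedback set, what the composed map looks like when the influence digraph has several or longer cycles, and why the paired bound closes up around every cycle with a product of factors strictly below one are all left unproved; this is a plan, not a proof. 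For comparison, the paper does not take your composed route at all: it deduces the contraction in one step directly from the column bound of Lemma~\ref{lem:mag} (``each column has magnitude at most $1-\epsilon$, and therefore the mapping is a contraction on $\Delta_\epsilon$''). The subtlety you raise is real---a column-wise $\ell_2$ bound does not by itself control the operator norm of a block containing several such columns---so your instinct to compose as in Example~\ref{exm:psi2} is sound, but as written your proposal replaces the paper's one-line deduction with an unexecuted programme rather than an argument.
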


\begin{proof}
By Theorem \ref{thm:linear},
\begin{align*}
\eta_{L}  = \lambda_{L}^{M} + f(\bs\lambda_{V\setminus M|M}).
\end{align*}
Since $N$ is the only margin in $\mathcal{P}$ such that $L \cap (V
\setminus N) \neq \emptyset$, it follows that all the parameters in
$\bs\lambda_{V \setminus M|M}$ are known and fixed except for
$(\lambda_{K}^V : K \in \mathbb{L}_N)$, where $\mathbb{L}_N$ is the
set of effects contained in the margin $N$.  Hence
\begin{align}
\eta_{L} = \lambda_{L}^{M}  + f(\bs\eta_{\mathbb{L}_N}). \label{eqn:fp}
\end{align}
Now, consider the vector equation obtained by stacking (\ref{eqn:fp})
over all pairs $(L,M) \in \mathcal{P}$.  This defines a fixed point
equation whose solution is $\bs\eta$, and the column of the Jacobian
corresponding to $L$ has non-zero entries
\begin{align*}
&\frac{\partial \lambda_{C}^{N}}{\partial \eta_{L}}, \qquad C \in
\mathbb{L}_N.  
\end{align*} 
From Lemma \ref{lem:mag}, each column has magnitude at most
$1-\epsilon$, and therefore the mapping is a contraction on
$\Delta_{\epsilon}$ for each $\epsilon > 0$.  It follows that the
fixed point equation has a unique solution which, by the inverse
function theorem, is a smooth function of $\bs\lambda$.
\end{proof}

From this result we obtain the following corollary, the conditions of
which are easy to verify.

\begin{cor}
Any complete parameterization with at most three margins is smooth.
\end{cor}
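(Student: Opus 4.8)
The plan is to reduce the statement directly to Lemma \ref{lem:fixedpoint}, whose hypothesis I claim is automatically met as soon as the number of margins is at most three. The first step is to observe that completeness forces the full set $V$ to appear as a margin: the effect $V$ must occur somewhere in $\mathcal{P}$, and the only margin $M$ with $V \subseteq M$ is $M = V$ itself. Consequently, among at most three margins, at most two can be proper subsets of $V$.

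With that in hand, I would verify the condition of Lemma \ref{lem:fixedpoint}. That condition concerns only pairs $(L,M) \in \mathcal{P}$ with $M \subset V$, and requires that there be at most one \emph{other} margin $N \subset V$ in $\mathcal{P}$ with $L \cap (V \setminus N) \neq \emptyset$. But since there are at most two margins strictly contained in $V$ in total, fixing one such margin $M$ leaves at most one further margin $N \subset V$; a fortiori there is at most one such $N$ also satisfying the intersection requirement. Thus the counting constraint of the lemma is satisfied regardless of which effects appear.

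Applying Lemma \ref{lem:fixedpoint} then gives smoothness of $\bs\lambda_{\mathcal{P}}$ at once. I do not anticipate any genuine obstacle here: all of the analytic work — showing that the stacked fixed point map is a contraction on each $\Delta_{\epsilon}$ via the column-magnitude bound of Lemma \ref{lem:mag} — has already been carried out inside the proof of Lemma \ref{lem:fixedpoint}. The only point requiring a little care is the combinatorial reduction, namely noticing that $V$ is always a margin, so that ``three margins'' really means ``two proper margins''; this is precisely what makes the at-most-one-other-margin hypothesis automatic and hence trivially checkable.
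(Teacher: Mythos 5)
Your proposal is correct and follows essentially the same route as the paper, which likewise notes that completeness forces $V$ to be one of the margins and then invokes Lemma \ref{lem:fixedpoint}; you have merely spelled out the counting step (at most two proper margins, hence at most one ``other'' margin $N \subset V$) that the paper leaves implicit.
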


\begin{proof}
  Since one of the margins must be $V$, it is clear that the
  conditions of Lemma \ref{lem:fixedpoint} hold.
\end{proof}

\begin{exm} \label{exm:reduce}
Consider $\mathcal{P}$ below.
\begin{align*}
\mathcal{P}: 
\bmllp
1&1\\
12&2\\
13&3\\
123&12, 13, 23, 123\\
\emllp
&&
\mathcal{Q}: 
\bmllp
1&1\\
123&2, 12, 3, 13, 23, 123\\
\emllp
\end{align*}
Although it does not satisfy the conditions of Lemma
\ref{lem:fixedpoint} directly, one can use the basic idea to set up a
smooth contraction mapping from $\bs\lambda_{\mathcal{P}}$ to
$\bs\lambda_{\mathcal{Q}}$; since $\mathcal{Q}$ is hierarchical, both
parameterizations are smooth.
\end{exm}

\section{Cyclic Parameterizations} \label{sec:cycles}

This section takes a third approach to determining smoothness, by
using Markov chain theory to recover certain marginal distributions.
This method allows us to demonstrate the smoothness of certain
conditional independence models.

\citet[][Example 2]{forcina:12} considers the model defined (up to
some relabelling) by the conditional independences
\begin{align}
&X_1 \indep X_2 \,|\,  X_3, &&X_1 \indep X_3 \,|\, X_4, &&X_1 \indep X_4 \,|\, X_2, \label{eqn:cim}
\end{align}
which is equivalent to setting the parameters
\begin{align}
\bmllp
123&12, 123\\
134&13, 134\\
124&14, 124\\
\emllp \label{eqn:cimp}
\end{align}
to zero.  Note that we cannot embed these parameters into a larger
hierarchical parameterization, because each pairwise effect will
`belong' to a margin preceding it; for example, $12$ is a subset of
$124$, so for hierarchy the margin $123$ must precede $124$; by a
similar argument, $124$ must precede $134$ which must precede $123$.
We therefore have a cyclic parameterization, referred to as a `loop'
by Forcina.  None of the methods used in the previous sections seem
well suited to dealing with this situation.

\citet{forcina:12} presents an algorithm for recovering joint
distributions given parameterizations of this kind, together with a
condition under which it is guaranteed to converge to the unique
solution.  However, this condition is on the spectral radius of a
complicated Jacobian, and is difficult to verify except in a few
special cases: a numerical test is suggested, but this does not
constitute a proof of smoothness.  Here we show that, at least in some
cases, Forcina's algorithm can be recast as a Markov chain whose
stationary distribution is some margin of the relevant probability 
distribution.

\begin{thm} \label{thm:markov} 
  Let $A_1, \ldots, A_k$ be a disjoint sequence of sets with $k \geq
  2$ such that the conditional distributions $p(x_{A_i} \,|\,
  x_{A_{i-1}}) > 0$ for $i = 2,\ldots,k$ are known, together with
  $p(x_{A_1} \,|\, x_{A_k})$.  Then the marginal distributions $p(x_{A_i})$
  are smoothly recoverable.
\end{thm}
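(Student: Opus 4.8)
The plan is to recognise the cyclic family of conditionals as a single Markov transition kernel and to identify the sought marginal with its stationary distribution. Concretely, I would compose the conditionals once around the loop to define a transition matrix $T$ on the configuration space of $X_{A_1}$,
\begin{align*}
T(x_{A_1}, x'_{A_1}) = \sum_{x_{A_2},\ldots,x_{A_k}} p(x_{A_2}\,|\,x_{A_1})\, p(x_{A_3}\,|\,x_{A_2}) \cdots p(x_{A_k}\,|\,x_{A_{k-1}})\, p(x'_{A_1}\,|\,x_{A_k}).
\end{align*}
Summing over $x'_{A_1}$ and telescoping the conditional distributions from the inside out shows that every row of $T$ sums to one, so $T$ is stochastic; and since each conditional is strictly positive by hypothesis, so is $T$, whence the associated finite-state chain is irreducible and aperiodic.

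First I would verify that the true marginal $p(x_{A_1})$ is stationary for $T$. Writing $\sum_{x_{A_1}} p(x_{A_1})\, T(x_{A_1}, x'_{A_1})$ and pushing the sum over $x_{A_1}$ inward, the law of total probability $\sum_{x_{A_i}} p(x_{A_i})\, p(x_{A_{i+1}}\,|\,x_{A_i}) = p(x_{A_{i+1}})$ applies at each stage and telescopes around the cycle to return $p(x'_{A_1})$. Hence $p(\cdot\,{}_{A_1})$ solves the stationarity equation. Because $T$ is a strictly positive stochastic matrix, the Perron--Frobenius theorem guarantees that $1$ is a simple eigenvalue and that the stationary vector is unique up to normalisation; therefore $p(x_{A_1})$ is \emph{determined} as the unique stationary distribution of $T$. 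The remaining marginals are then obtained by forward propagation, $p(x_{A_{i+1}}) = \sum_{x_{A_i}} p(x_{A_{i+1}}\,|\,x_{A_i})\, p(x_{A_i})$, applied successively around the loop.

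It remains to establish that the whole map from the given conditionals to the recovered marginals is smooth. The entries of $T$ are polynomials in the conditional probabilities, and forward propagation is likewise polynomial, so the crux is the smoothness of the stationary-distribution map $T \mapsto \pi$. I would obtain this from the implicit function theorem applied to the system $\pi(I - T) = 0$ together with the normalisation $\pi\mathbf{1} = 1$: the equations $\pi(I-T)=0$ carry one redundancy (their columnwise sum vanishes since $T\mathbf{1}=\mathbf{1}$), and after discarding it and adjoining the normalisation the Jacobian in $\pi$ is nonsingular \emph{precisely because} $1$ is a simple eigenvalue of the primitive matrix $T$. Composing the three smooth maps yields the claim. (Equivalently, one may invoke the fact that the eigenprojection associated with a simple eigenvalue depends analytically on the matrix entries.)

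The main obstacle is this smoothness step, and it is where the strict-positivity assumption does the real work: positivity is exactly what makes $T$ primitive and the Perron eigenvalue simple, and simplicity of the eigenvalue is what rules out a degenerate Jacobian and forces the stationary distribution---hence every recovered marginal---to vary smoothly with the conditionals. Everything else (stochasticity of $T$, stationarity of the true marginal, uniqueness, and propagation) is elementary once the Markov-chain viewpoint is adopted.
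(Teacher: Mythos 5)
Your proposal is correct and follows essentially the same route as the paper: compose the cyclic conditionals into a single positive stochastic matrix on the state space of $X_{A_1}$, identify the true marginal as its unique stationary distribution, and deduce smoothness from the (one-dimensional) kernel of $I-T^{\sf T}$ depending smoothly on the entries. Your treatment of the smoothness step via the implicit function theorem and simplicity of the Perron eigenvalue is slightly more explicit than the paper's, but it is the same argument.
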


\begin{proof}
Define a $|X_{A_1}| \times |X_{A_1}|$ matrix $M$ with entries
\begin{align*}
M(x_{A_1}', x_{A_1}) = \sum_{x_{A_k}} \cdots \sum_{x_{A_2}} p(x_{A_1} \,|\, x_{A_k}) p(x_{A_k} \,|\, x_{A_{k-1}}) \cdots p(x_{A_2} \,|\, x_{A_1}').
\end{align*}
This is a (right) stochastic matrix with strictly positive entries,
and the marginal distribution $p(x_{A_1})$ satisfies
\begin{align*}
p(x_{A_1}) = \sum_{x_{A_1}'} p(x_{A_1}') M(x_{A_1}', x_{A_1}).
\end{align*}
In other words, $p(x_{A_1})$ is an invariant distribution for the
Markov chain with transition matrix defined by $M$.  Since $M$ has a
finite state-space and all transition probabilities are positive, the
chain is positive recurrent and the equations have a unique solution
\citep[see, e.g.][]{norris:97}.  Hence $p(x_{A_1})$ is defined by the
kernel of the matrix $I - M^T$, and this is a smooth function of the
original conditional probabilities.
\end{proof}

\begin{rmk}
  The Markov chain corresponding to $M$ is that which would be
  obtained by picking some $X_{A_1}$, and then evolving $X_{A_i}$
  using $p(x_{A_i} \,|\, x_{A_{i-1}})$ until we get back to $i=1$.  The
  equations can be solved iteratively by repeatedly right
  multiplying any positive vector by $M$, so that it converges to the 
  stationary distribution of the chain; this corresponds precisely to
  Forcina's algorithm.
%
\end{rmk}

\begin{exm}[\citet{forcina:12}, Example 9] \label{exm:cycle}
Consider the cyclic parameterization $\mathcal{P}$.
\begin{align*}
&\mathcal{P}: \bmllp
12&1, 12\\
23&2, 23\\
13&3, 13\\
123&123\\
\emllp &&
\mathcal{Q}: \bmllp
3&3\\
23&2, 23\\
12&1, 12\\
13&13\\
123&123\\
\emllp
\end{align*}
The parameters corresponding to the first
three margins in $\mathcal{P}$ are equivalent to the conditional
distributions $p_{1|2}$, $p_{2|3}$ and $p_{3|1}$.  Using the
conditionals in the manner suggested by Theorem \ref{thm:markov}, we
can smoothly recover (for example) the margin $p_3$ (or equivalently
$\lambda_3^3$), and consequently $\mathcal{P}$ is equivalent to the
hierarchical parameterization $\mathcal{Q}$.
\end{exm}

\begin{exm} \label{exm:cycle2}
  The parameters (\ref{eqn:cimp}) can be embedded in the
  complete parameterization $\mathcal{P}$ below.
\begin{align*}
&\mathcal{P}: 
\bmllp
123&2, 23, 12, 123\\
134&3, 34, 13, 134\\
124&4, 24, 14, 124\\
1234&(other subsets)
\emllp,&&
\mathcal{P}_{-1}: 
\bmllp
23&2, 23\\
34&3, 34\\
24&4, 24\\
234&234\\
\emllp.
\end{align*}
$\mathcal{P}$ satisfies Proposition \ref{prop:cond} with $v=1$ and
reduces to $\mathcal{P}_{-1}$, which is isomorphic to the smooth
parameterization in Example \ref{exm:cycle}.  Hence $\mathcal{P}$ is 
smooth, and the conditional independence model (\ref{eqn:cim}) is a 
curved exponential family.
\end{exm}

\begin{exm} \label{exm:cycle3} 
Consider the model defined by
\[
X_1 \indep X_2 \,|\, X_3, \qquad X_2 \indep X_4 \,|\, X_1, \qquad X_1 \indep X_3 \,|\, X_4, \qquad X_3 \indep X_4 \,|\, X_2;
\]
it consists of setting the parameters in $\mathcal{P}$ below to zero.
\begin{align*}
& \mathcal{P}: 
\bmllp
123&12, 123\\
124&24, 124\\
134&13, 134\\
234&34, 234\\
\emllp && \mathcal{Q}: 
\bmllp
14&1, 4, 14\\
23&2, 3, 23\\
123&12, 123\\
124&24, 124\\
134&13, 134\\
234&34, 234\\
1234&1234\\
\emllp
\end{align*}
We can embed $\mathcal{P}$ in the complete parameterization
$\mathcal{Q}$.  Note that using $\lambda_4^{14}, \lambda_{14}^{14}$
and the fact that $X_4 \indep X_2 \,|\, X_1$, means we can construct
the conditional distribution $p(x_4 \,|\, x_1, x_2)$.  Similarly we
have $p(x_3 \,|\, x_2, x_4)$, $p(x_1 \,|\, x_3, x_4)$ and $p(x_2 \,|\,
x_1, x_3)$.  In a manner analogous to the previous example, we can set
up a Markov chain whose stationary distribution is the marginal
$p(x_1, x_2)$ as follows.  First pick $x_1^{(0)},x_2^{(0)}$.  Now, for $i > 0$
\bitem 
\item draw $x_4^{(i)}$ from the distribution $p(x_4 \,|\, x_1^{(i-1)}, x_2^{(i-1)})$;
\item draw $x_3^{(i)}$ from the distribution $p(x_3 \,|\, x_2^{(i-1)}, x_4^{(i)})$;
\item draw $x_1^{(i)}$ from the distribution $p(x_1 \,|\, x_3^{(i)}, x_4^{(i)})$;
\item draw $x_2^{(i)}$ from the distribution $p(x_2 \,|\, x_1^{(i)},
  x_3^{(i)})$.  
\eitem 
Then the distribution of $(x_1^{(i)}, x_2^{(i)})$ converges to $p_{12}$.
We can therefore smoothly recover a distribution satisfying the conditional
independence constraints from the 7 free parameters.  The dimension of the 
model is full, so we have a smooth parameterization of the model, which is
therefore a curved exponential family \citet{lau:96}.
\end{exm}

Note that the construction of the Markov chain in Example
\ref{exm:cycle3} is only possible when the conditional independence
constraints hold, so---unlike in Examples \ref{exm:cycle} and
\ref{exm:cycle2}---we have not actually demonstrated that
$\bs\lambda_{\mathcal{Q}}$ is generally smooth, only that the model
defined by setting $\bs\lambda_{\mathcal{P}} = \bs 0$ is a curved
exponential family.

\begin{rmk}
  Some conditional independence models are non-smooth: e.g.\ the model
  defined by $X_1 \indep X_2, X_4$ and $X_2 \indep X_4 \,|\, X_1, X_3$
  \citep{drton:09}.  This is essentially because it requires that
  $\lambda_{124}^{124} = \lambda_{124}^{1234} = 0$, and setting
  repeated (non-redundant) effects to zero always leads to non-smooth
  parameterizations.

  We remark that all discrete conditional independence models on four
  variables either require repeated effects to be constrained in
  different margins, or can be shown to be smooth using the results of
  this section.  However, the next example shows that for five
  variables the picture is incomplete.
\end{rmk}

\begin{exm} \label{exm:cin}
The conditional independence model defined by 
\begin{align*}
& X_1 \indep X_2 \,|\, X_3, && X_1 \indep X_5 \,|\, X_2, && X_1 \indep X_3 \,|\, X_4, \\
& X_3 \indep X_5 \,|\, X_1, &&  X_3 \indep X_4 \,|\, X_2, X_5
\end{align*}
contains no repeated effects, and yet does not appear to be
approachable using the methods outlined above.  Empirically, Forcina's
algorithm seems to converge to the correct solution, which suggests
that the model is indeed smooth.
\end{exm}

\section{Discussion} \label{sec:discuss}

We have presented three new approaches to demonstrating that complete
but non-hierarchical marginal log-linear parameterizations are smooth,
although a general result eludes us.  Note that each of the approaches
provides an explicit algorithm for obtaining the probabilities from
the parameterization, either using the map in Section
\ref{sec:smooth}, the fixed point iteration in Section
\ref{sec:fixed}, or the Markov chain in Section \ref{sec:cycles}.

There are 104 complete MLL
parameterizations on three variables, of which 23 are hierarchical and
a further 4 consist of only two margins, so are smooth by the results
of \citet{br:02} and \citet{forcina:12} respectively.  These 27 were
the only ones known to be smooth prior to this paper.

A further 5 can be shown smooth using Proposition \ref{prop:reduce},
and one using Proposition \ref{prop:cond} (Example \ref{exm:cond}).
Another 26 can be dealt with using Lemma \ref{lem:fixedpoint} in
combination with other methods, and the approach in Example
\ref{exm:reduce} can be applied to three more.  Example
\ref{exm:cycle} brings the total number of known smooth models to 63.

In addition, of the remaining 41 complete parameterizations, there are
smooth mappings between a group of four and a group of three, so it
remains to establish the smoothness (or otherwise) of at most 36
distinct parameterizations.  As an example of a parameterization whose
smoothness is still not established, consider:
\begin{align*}
& \mathcal{P}: 
\bmllp
12&1, 2\\
13&3, 13\\
23&23\\
123&12, 123\\
\emllp.
\end{align*}
We conjecture that any complete parameterization is smooth, a result
which would enable us to show that models such as that given in
Example \ref{exm:cin} are curved exponential families of
distributions.

\begin{conj}
Any complete MLL parameterization is smooth.
\end{conj}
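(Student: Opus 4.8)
The plan is to reduce the conjecture to a single local property of the smooth map $\bs p \mapsto \bs\lambda_{\mathcal{P}}$: that its Jacobian has full rank at every point of $\Delta^k$. Given this, openness of the image is immediate from the inverse function theorem, and global injectivity can be obtained either from a Hadamard-type global inverse function argument over the contractible domain $\Delta^k$ (using properness, which should follow from the blow-up $\|\bs\lambda_{\mathcal{P}}\| \to \infty$ as $\bs p \to \partial\Delta^k$) or, more in the spirit of this paper, directly from the constructive recovery schemes below, each of which reconstructs $\bs p$ uniquely from $\bs\lambda_{\mathcal{P}}$. Since $\bs\eta$ already furnishes a diffeomorphism of $\Delta^k$ onto $\reals^k$, I would work throughout with $D = \partial\bs\lambda_{\mathcal{P}}/\partial\bs\eta$, reducing everything to showing that $D$ is nonsingular everywhere.

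To analyse $D$ I would exploit the exact identity $\lambda_L^V = \eta_L$ together with Theorem \ref{thm:linear}. For each $(L,M) \in \mathcal{P}$ the decomposition (\ref{eqn:equal}) gives $\eta_L = \lambda_L^M + f(\bs\lambda_{V\setminus M|M})$, and since $L \subseteq M$ the parameter $\eta_L$ does not appear among the arguments of $f$, so the diagonal entry $\partial\lambda_L^M/\partial\eta_L$ equals $1$; meanwhile (\ref{eqn:deriv}) shows that $\partial\lambda_L^M/\partial\eta_K$ can be nonzero only when $K \not\subseteq M$. Writing $D = I - E$, completeness ensures that each nonempty effect indexes exactly one row and one column, so $D$ is square with unit diagonal and $E$ has zero diagonal. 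I would then form the \emph{dependency digraph} on effects, with an arc $L \to K$ whenever $E_{LK} \neq 0$ (forcing $K \not\subseteq M(L)$). When this digraph is acyclic, reordering the effects makes $E$ strictly triangular and $D$ unitriangular, hence nonsingular; this recovers Proposition \ref{prop:reduce}, Corollary \ref{cor:nested}, and the three-margin case uniformly. The entire content of the conjecture is therefore concentrated in the strongly connected components of this digraph---exactly the \emph{loop} phenomenon of Examples \ref{exm:cycle}--\ref{exm:cycle3}.

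For the cyclic blocks I would seek a general analogue of Theorem \ref{thm:markov}. Rather than bounding $E$ directly, the guiding hope is that each strongly connected component corresponds to a composition of conditional distributions traversing a loop, so that recovery of the enclosed marginal can be posed as finding the stationary distribution of a strictly positive stochastic matrix $M$. Perron--Frobenius theory then supplies a unique stationary vector depending smoothly on the entries of $M$, and hence on $\bs\lambda_{\mathcal{P}}$, which gives the local smooth inverse on that block. Where the contraction bounds of Lemma \ref{lem:mag} and Lemma \ref{lem:fixedpoint} apply, they deliver the same conclusion more cheaply, since an operator norm below $1$ already forces $I - E$ to be invertible; the hard regime---captured by Forcina's condition on a spectral radius---is the one in which no single-step contraction holds and only a global stationarity argument can succeed.

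The hard part will be making this cyclic step general. The Markov-chain reinterpretation of Section \ref{sec:cycles} is constructed by hand for clean loops, and it is unclear how to attach a positive stochastic matrix to the strongly connected component of an arbitrary complete parameterization, where several effects and margins may be interlocked in a single cycle. Example \ref{exm:cin} is precisely a case in which neither the contraction bounds nor any obvious Markov-chain construction applies, yet the map appears numerically smooth; this strongly suggests that closing the conjecture will demand a genuinely new certificate of nonsingularity for $I - E$ on a cyclic block---perhaps a monotonicity, convexity, or degree-theoretic principle---that does not rely on exhibiting an explicit stochastic matrix or a contraction.
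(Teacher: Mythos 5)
The statement you are trying to prove is stated in the paper as a \emph{conjecture}: the author explicitly writes that ``a general result eludes us,'' and the three techniques of Sections \ref{sec:smooth}--\ref{sec:cycles} together cover only 63 of the 104 complete parameterizations on three variables. So there is no proof in the paper to compare against, and your proposal does not supply one either. Your reduction of the problem to nonsingularity of $D = I - E$, where $E_{LK} = -\partial\lambda_L^{M(L)}/\partial\eta_K$ is supported on $K \not\subseteq M(L)$, is a clean and correct reformulation (it is essentially the Jacobian computation underlying Theorem \ref{thm:linear} and Lemma \ref{lem:fixedpoint}), and the acyclic case does follow by triangularity. But the entire difficulty of the conjecture lives in the cyclic blocks, and there your argument is, by your own admission, a hope rather than a proof: you have no construction that attaches a positive stochastic matrix (or any other certificate of invertibility of $I-E$) to an arbitrary strongly connected component. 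Example \ref{exm:cin} is precisely a witness that neither the contraction bound of Lemma \ref{lem:mag} nor any known Markov-chain encoding applies, so the step ``the hard regime \ldots{} can only succeed via a global stationarity argument'' is the missing idea, not a proof strategy.

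Two further points would need repair even if the cyclic step were closed. First, the claim that acyclicity of the dependency digraph ``recovers \ldots{} the three-margin case uniformly'' is false: in Example \ref{exm:psi} (three margins $23$, $13$, $123$) the effects $1$ and $2$ form a two-cycle, since $1 \not\subseteq 23$ and $2 \not\subseteq 13$, which is exactly why the paper must invoke the contraction bound there; indeed even hierarchical parameterizations such as Example \ref{exm:mllp} have cyclic dependency digraphs, so strong connectivity does not isolate the genuinely problematic ``loop'' parameterizations. Second, everywhere-nonsingularity of the Jacobian only gives a local diffeomorphism; to get a homeomorphism onto an open set (as the paper's definition of smooth parameterization requires) you need global injectivity, and the properness you invoke ($\|\bs\lambda_{\mathcal{P}}\| \to \infty$ as $\bs p \to \partial\Delta^k$) is not obvious for marginal log-linear parameters: a cell probability $p(x_V)$ can tend to zero while every proper margin $p_M$ stays bounded away from zero, in which case the divergent log-linear parameter need not be one that $\mathcal{P}$ assigns to the margin $V$. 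The constructive recovery schemes would sidestep this, but only in the cases where they exist --- which returns you to the main gap.
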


\bibliographystyle{plainnat}
\bibliography{mybib}

\newpage

\appendix

\section{Technical Proofs}

\subsection{Proof of Lemma \ref{lem:mag}}

\begin{lem} \label{lem:alt}
  Let $\bs d = (d_A)$ be a vector indexed by subsets $A \subseteq
  \{1,\ldots,k\}$.  Then $\|\bs d\| < 1$ if and only if for any 
$B \subseteq [k] \equiv \{1,\ldots,k\}$,
\begin{align*}
\left|\sum_{A \subseteq [k]} (-1)^{|A \cap B|} d_A \right| < 1.
\end{align*}
\end{lem}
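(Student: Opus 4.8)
The plan is to read the map $\bs d \mapsto \bs e$, where $e_B := \sum_{A \subseteq [k]} (-1)^{|A \cap B|} d_A$, as the Walsh--Hadamard transform on $\{0,1\}^k$. Writing $H = (H_{BA})$ with $H_{BA} = (-1)^{|A \cap B|}$, this matrix is symmetric and satisfies $H^2 = 2^k I$ (each off-diagonal entry of $H^2$ is a character sum $\sum_B (-1)^{|B \cap (A \triangle A')|}$ that vanishes unless $A = A'$), so $2^{-k/2} H$ is orthogonal and the transform is, up to scaling, an involution. Since the right-hand condition is exactly $\max_B |e_B| < 1$, the lemma asserts that this coincides with the event $\|\bs d\| < 1$. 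I would prove the biconditional by induction on $k$, peeling off a single coordinate, so that the ``if and only if'' is carried along intact at every step rather than being argued as two separate one-sided bounds.

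For the inductive step, split each $A \subseteq [k]$ according to whether it contains the coordinate $k$, and set $\bs d^0 = (d_{A'})_{A' \subseteq [k-1]}$ and $\bs d^1 = (d_{A' \cup \{k\}})_{A' \subseteq [k-1]}$. The only computation needed is that $(-1)^{|(A' \cup \{k\}) \cap B|}$ equals $(-1)^{|A' \cap B|}$ when $k \notin B$ and its negative when $k \in B$; this yields $e_B = (H^{(k-1)}(\bs d^0 + \bs d^1))_B$ for $B \not\ni k$, and $e_B = (H^{(k-1)}(\bs d^0 - \bs d^1))_{B'}$ for $B = B' \cup \{k\}$. Thus the $2^k$ numbers $\{e_B\}$ are precisely the two families of $(k-1)$-dimensional transform values of $\bs u = \bs d^0 + \bs d^1$ and $\bs w = \bs d^0 - \bs d^1$. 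So ``$|e_B| < 1$ for all $B$'' is equivalent to ``all transform values of $\bs u$ are $< 1$ \emph{and} all transform values of $\bs w$ are $< 1$'', which by the inductive hypothesis is equivalent to $\|\bs u\| < 1$ and $\|\bs w\| < 1$. The base case $k = 1$ is the elementary identity $\max(|d_\emptyset + d_{\{1\}}|, |d_\emptyset - d_{\{1\}}|) = |d_\emptyset| + |d_{\{1\}}|$, which already delivers \emph{both} directions simultaneously.

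The hard part will not be the sign algebra but reconciling the norm $\|\bs d\|$ with the recursion this produces: the induction shows the transform-side condition is equivalent to $\max(\|\bs d^0 + \bs d^1\|, \|\bs d^0 - \bs d^1\|) < 1$, so the crux is to verify that $\|\bs d\|$ genuinely satisfies $\|\bs d\| = \max(\|\bs d^0 + \bs d^1\|, \|\bs d^0 - \bs d^1\|)$ with the stated single-coordinate base value, equivalently that $\|\bs d\| = \max_B |e_B|$. This is exactly the point at which the equivalence would fail for a naively chosen norm, so it is where I expect the real content to sit. As an independent sanity check on the less transparent implication, the involution identity gives Parseval in the form $\sum_B e_B^2 = 2^k \sum_A d_A^2$; since there are $2^k$ summands, $\max_B |e_B| < 1$ forces $\sum_B e_B^2 < 2^k$ and hence $\sum_A d_A^2 < 1$, confirming that controlling every transform value does constrain the overall size of $\bs d$.
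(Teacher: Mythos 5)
Your headline strategy cannot be completed, and the obstruction is not a technicality you can expect to resolve at the point you flagged as ``the crux''---it is fatal. The induction requires the identity $\|\bs d\| = \max\left(\|\bs d^0 + \bs d^1\|, \|\bs d^0 - \bs d^1\|\right)$, equivalently $\|\bs d\| = \max_B |e_B|$. For the Euclidean norm (which is the norm the paper intends and uses downstream) this is false: the parallelogram law gives $\|\bs d^0+\bs d^1\|^2 + \|\bs d^0 - \bs d^1\|^2 = 2\|\bs d\|^2$, so the maximum equals $\|\bs d\|$ only in the degenerate case $\langle \bs d^0, \bs d^1\rangle = 0$ (equivalently, only when all the $|e_B|$ coincide). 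Your own base case already exhibits the mismatch: $\max(|d_\emptyset + d_{\{1\}}|, |d_\emptyset - d_{\{1\}}|) = |d_\emptyset| + |d_{\{1\}}|$ is the $\ell_1$ norm of $\bs d$, not $\sqrt{d_\emptyset^2 + d_{\{1\}}^2}$, so it does not deliver the stated biconditional at $k=1$. In fact no proof of the biconditional exists, because the ``only if'' direction of the lemma is false as stated: take $k=1$ and $d_\emptyset = d_{\{1\}} = 0.7$, so that $\|\bs d\| = \sqrt{0.98} < 1$ while $|e_\emptyset| = 1.4 \geq 1$. (This is a defect of the paper's statement, not of your reading: only the implication ``$|e_B|<1$ for all $B$ implies $\|\bs d\|<1$'' is true, and it is the only implication ever used.)

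The salvageable part of your proposal is the paragraph you demoted to a ``sanity check'': it is, almost verbatim, the paper's entire proof, and it proves exactly the direction that is true and that is needed. The paper observes that the matrix with entries $2^{-k/2}(-1)^{|A \cap B|}$ is orthogonal, hence length-preserving; so if every $|e_B| < 1$ then $\|\bs d\|^2 = 2^{-k}\sum_B e_B^2 < 1$---your Parseval computation. This one-sided statement is also all that the application requires: in the proof of Lemma \ref{lem:mag}, the bounds $\bigl|\sum_C (-1)^{|y_C|} d_C\bigr| \leq 1-\epsilon$ on the transform values are converted into the bound $\sum_C |d_C|^2 \leq 1-\epsilon$, and nothing in the reverse direction is invoked. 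So the correct fix is to abandon the induction, restrict the claim to the true implication, and promote the orthogonality/Parseval argument from afterthought to proof.
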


\begin{proof}
  The $2^k\times 2^k$-matrix $M$ with $(B,A)$th entry $M_{B,A} =
  2^{-k/2} (-1)^{|A \cap B|}$ is orthogonal, and therefore preserves
  vector lengths.  Then the vector $M \bs d$ has entries with
  magnitude at most $2^{-k/2}$, and therefore has total magnitude at
  most 1.  The same is therefore true of $\bs d$.
\end{proof}

\begin{proof}[Proof of Lemma \ref{lem:mag}]
For $C \subseteq M$, define 
\begin{align*}
d_{C} &\equiv -2^{-|M|} \sum_{x_V} (-1)^{|x_{C \triangle {(J \cup K)}}|} p(x_{V\setminus M} \,|\, x_M)\\
&= -2^{-|M|} \sum_{x_V} (-1)^{|x_{C \triangle J}| + |x_K|} p(x_{V\setminus M} \,|\, x_M)
\end{align*}
so that $d_{C} = \frac{\partial \lambda_{C}^{M}}{\partial
  \eta_{JK}}$ for $C \neq \emptyset$.  Given $y_M$, 
\begin{align*}
\sum_{C \subseteq M} (-1)^{|y_C|} d_{C} &=  -2^{-|M|} \sum_{C \subseteq M} (-1)^{|y_{C}|} \sum_{x_V} (-1)^{|x_{C \triangle J}| + |x_K|} p(x_{V\setminus M} \,|\, x_M),
\end{align*}
and note that
\begin{align*}
\lefteqn{\sum_{C \subseteq M} (-1)^{|y_{C}|} (-1)^{|x_{C \triangle J}|}}\\
&= \sum_{C \subseteq M \setminus \{v\}} (-1)^{|y_{C}|} (-1)^{|x_{C \triangle J}|} + (-1)^{|y_{Cv}|} (-1)^{|x_{Cv \triangle J}|}\\
&= \sum_{C \subseteq M \setminus \{v\}} (-1)^{|y_{C}|} (-1)^{|x_{C \triangle J}|} \left\{1+(-1)^{|y_v|} (-1)^{|x_{v}|} \right\}\\
\intertext{where the expression in braces is 2 if $x_v=y_v$ or 0 otherwise, so}
&= \sum_{C \subseteq M \setminus \{v\}} 2 (-1)^{|y_{C}|} (-1)^{|x_{C \triangle J}|} \Ind_{\{x_v = y_v\}}\\
&= 2^{|M|} \Ind_{\{x_M = y_M\}}.
\end{align*}
Hence
\begin{align*}
\sum_{C \subseteq M} (-1)^{|y_C|} d_{C} &=  - \sum_{x_{V \setminus M}} (-1)^{|x_K|} p(x_{V\setminus M} \,|\, y_M),
\end{align*}
which is an alternating sum of probabilities which sum to one, so
has absolute value at most $1 - \epsilon$.  The result follows from Lemma
\ref{lem:alt}.
The second result is essentially identical, due to the symmetry between
$L,K$ in (\ref{eqn:deriv}).
\end{proof}

\end{document}